\numberwithin{equation}{section}
\newtheorem{theorem}{Theorem}[section]
\newtheorem{lemma}[theorem]{Lemma}
\newtheorem{proposition}[theorem]{Proposition}
\newtheorem{corollary}[theorem]{Corollary}
\theoremstyle{definition}
\newtheorem{def-prop}[theorem]{Definition-Proposition}
\newtheorem{remark}[theorem]{Remark}
\newtheorem{example}[theorem]{Example}
\newtheorem*{acknowledgement}{Acknowledgement}
\DeclareMathOperator{\chara}{char}
\DeclareMathOperator{\Tor}{Tor}
\DeclareMathOperator{\reg}{reg}
\DeclareMathOperator{\depth}{depth}
\DeclareMathOperator{\Ass}{Ass}
\DeclareMathOperator{\Min}{Min}
\DeclareMathOperator{\spec}{Spec}
\newcommand{\ZZ}{{\mathbb Z}}
\newcommand{\QQ}{{\mathbb Q}}
\def\mm{{\mathfrak m}}
\def\pp{{\mathfrak p}}
\def\qq{{\mathfrak q}}
\begin{document}

\title{Symbolic powers of sums of ideals}

\author{Huy T\`ai H\`a}
\address{Tulane University \\ Department of Mathematics \\
6823 St. Charles Ave. \\ New Orleans, LA 70118, USA}
\email{tha@tulane.edu}

\author{Hop Dang Nguyen}
\address{Institute of Mathematics \\ Vietnam Academy of Science and Technology, 18 Hoang Quoc Viet \\ Hanoi, Vietnam}
\email{ngdhop@gmail.com}

\author{Ngo Viet Trung}
\address{International Centre for Research and Postgraduate Training, Institute of Mathematics \\ Vietnam Academy of Science and Technology, 18 Hoang Quoc Viet \\ Hanoi, Vietnam}
\email{nvtrung@math.ac.vn}

\author{Tran Nam Trung}
\address{Institute of Mathematics, VAST, 18 Hoang Quoc Viet, Hanoi, Viet Nam, and TIMAS, Thang Long University, Nghiem Xuan Yem road, Hoang Mai district, Hanoi, Vietnam}
\email{tntrung@math.ac.vn}

\keywords{Symbolic power, sum of ideals, associated prime, tensor product, binomial expansion, depth, Castelnuovo-Mumford regularity, Tor-vanishing, depth function}
\subjclass[2010]{Primary 13C05, 14B05; Secondary 13D07, 18G15}
		
\begin{abstract}
Let $I$ and $J$ be nonzero ideals in two Noetherian algebras $A$ and $B$ over a field $k$.
Let $I+J$ denote the ideal generated by $I$ and $J$ in $A\otimes_k B$.
We prove the following expansion for the symbolic powers:
$$(I+J)^{(n)} = \sum_{i+j = n} I^{(i)} J^{(j)}.$$
If $A$ and $B$ are polynomial rings and if $\chara(k) = 0$ or if $I$ and $J$ are monomial ideals, we give
exact formulas for the depth and the Castelnuovo-Mumford regularity of $(I+J)^{(n)}$,
which depend on the interplay between the symbolic powers of $I$ and $J$.
The proof involves a result of independent interest which states that under the above assumption, the induced map $\Tor_i^A(k,I^{(n)}) \to \Tor_i^A(k,I^{(n-1)})$ is zero for all $i \ge 0$, $n \ge 0$.
We also investigate other properties and invariants of $(I+J)^{(n)}$ such as the equality between ordinary and symbolic powers, the Waldschmidt constant and the Cohen-Macaulayness. \par
\end{abstract}
\maketitle


\section{Introduction}

Let $R$ be a commutative Noetherian ring and let $Q$ be an ideal in $R$. For an integer $n \ge 1$, the {\em $n$-th symbolic power} of $Q$ is defined by
\[
Q^{(n)} := R \cap \Big(\bigcap_{P\in \Min(Q)} Q^nR_P \Big).
\]
In other words, $Q^{(n)}$ is the intersection of the primary components of $Q^n$ associated to the minimal primes of $Q$.\par

When $R$ is a polynomial ring over an algebraically closed field $k$ of characteristic zero and $Q$ is a radical ideal, Nagata and  Zariski showed that $Q^{(n)}$ consists of polynomials in $R$ whose partial derivatives of orders up to $n-1$ vanish on the zero set of $Q$ (see e.g. \cite{EH}). Therefore, symbolic powers carry more geometric information than ordinary powers of an ideal. In general, it is difficult to study properties of symbolic powers.
\par

Let $A$ and $B$ be commutative Noetherian algebras over an arbitrary field $k$. Let $I \subseteq A$ and $J \subseteq B$ be nonzero proper ideals. For simplicity we also use $I$ and $J$ to denote the extensions of $I$ and $J$ in the algebra $R := A \otimes_k B$. The main aim of this paper is to study the depth and the Castelnuovo-Mumford regularity (or simply regularity) of the symbolic powers of the sum $I+J$ in $R$. Such sums of ideals appear naturally in various contexts:
\begin{itemize}
\item {\it Fiber product of affine schemes}:  Let $X$ and let $Y$ be the affine schemes $\spec(A/I)$ and $\spec(B/J)$, then the fiber product $X \times_k Y$ is the affine scheme $\spec(R/I+J)$;
\item {\it Join of simplicial complexes}:  Let $\Delta$ and $\Gamma$ be simplicial complexes over disjoint vertex sets with Stanley-Reisner ideals $I_\Delta$ and $I_\Gamma$, then $I_\Delta + I_\Gamma$ is the Stanley-Reisner ideal of the join complex $\Delta \star\Gamma$;
\item {\it Edge ideal of a graph}:
Let $I(G)$ denote the edge ideal a simple graph $G$. If $G_1,...,G_n$ are the connected components of $G$, then \par\smallskip
\centerline{$I(G) = I(G_1) + \cdots + I(G_n).$}
\end{itemize}

Though symbolic powers have been studied extensively (see e.g. \cite{Ba,CEHH,CHS, ELS, HaHu, He, HKTT,HHT, HTr, HoHu, MT,Su,TeT,TT,Var}),
symbolic powers of such sums of ideals have not been considered in this general setting.
We shall see from this paper and our sequential work \cite{HNTT2} that studying sums of ideals may indeed provide new insights to many problems on symbolic powers.
\par

Several results on the depth and the regularity of the ordinary powers $(I+J)^n$ have been recently established in \cite{HTT, NgV}. These results have had a number of interesting consequences.
It is quite natural to ask whether there are similar results on the symbolic powers $(I+J)^{(n)}$. \par

The first step is to characterize $(I+J)^{(n)}$ in terms of $I$ and $J$.
In general, if $I$ and $J$ are prime ideals, $I+J$ needs not be a primary ideal.
This indicates that such a characterization would be complicated.
Surprisingly, we can show that there is a binomial expansion for the symbolic power $(I+J)^{(n)}$:
\medskip

\noindent{\bf Theorem \ref{binomial}.}
$(I+J)^{(n)} = \sum_{i+j = n} I^{(i)} J^{(j)}.$
\medskip

This formula was not known even in the simple case when $B = k[x]$ is a polynomial ring and $J = (x)$.
It was known before only for squarefree monomial ideals by Bocci et al \cite{Many}.
The proof of Theorem \ref{binomial} is based on a thorough study of associated primes of tensor products of modules over $A$ and $B$, which is of independent interest. \par

Theorem \ref{binomial} allows us to study several aspects of $(I+J)^{(n)}$.
First, we show that  when $A$ and $B$ are local rings or domains,
$(I+J)^{(n)} = (I+J)^n$ if and only if $I^{(t)} = I^t$ and $J^{(t)} = J^t$  for all $t \le n$, and that when $I$ and $J$ are homogeneous ideals of polynomial rings, then
$$\hat{\alpha}(I+J) = \min \{\hat{\alpha}(I), \hat{\alpha}(J)\},$$
where $\hat{\alpha}(I)$ denotes the Waldschmidt constant of an ideal, which appears in several areas of mathematics \cite{Ch, EV, HaHu, Wa}.
This formula for $\hat{\alpha}(I+J)$ was known before only for squarefree monomial ideals \cite{Many}. \par

Our main results on the depth and the regularity of $(I+J)^{(n)}$ can be summarized as follows.
\medskip

\noindent{\bf Theorem \ref{first bound}}.
{\em Let $A$ and $B$ be polynomial rings over a field $k$.
Let $I \subseteq A$ and $J \subseteq B$ be nonzero proper homogeneous ideals. Then
\begin{enumerate}[\quad \rm(i)]
\item $\depth R\big/(I+J)^{(n)} \ge$
$$\min_{\begin{subarray}{l} i \in [1,n-1]\\ j \in [1,n] \end{subarray}} \left\{\depth A/I^{(n-i)}  +  \depth B/J^{(i)}  +  1, \depth A/I^{(n-j+1)}  +  \depth B/J^{(j)} \right\},$$
\item $\reg R\big/(I+J)^{(n)} \le$
$$ \max_{\begin{subarray}{l} i \in [1,n-1]\\ j \in [1,n] \end{subarray}}  \left\{\reg A/I^{(n-i)} + \reg B/J^{(i)} + 1, \reg A/I^{(n-j+1)} + \reg B/J^{(j)} \right\}.$$
\end{enumerate} }
\medskip

\noindent{\bf Theorems \ref{thm_depthreg_char0monomial} and \ref{monomial}}.
{\em The inequalities of Theorem \ref{first bound} are equalities if $\chara(k) = 0$ or if $I$ and $J$ are monomial ideals.}
\medskip

We expect that equalities hold regardless of the characteristic of the field $k$.

The above results are intricate in the sense that the right-hand sides of the above formulae involve
the minimum or maximum value of two sets of different terms,  which can be attained separately.
It is a distinctive feature of polynomial rings because we can show that they do not hold if one of the rings $A$ and $B$ is  not a polynomial ring.
\par

Using the same approach we further obtain exact formulas for the depth and the regularity of the quotients $(I+J)^{(n)}/(I+J)^{(n+1)}$, that are independent of the characteristic of the field $k$.
\medskip

\noindent{\bf Theorem \ref{main equality}.}
{\em Let $A$ and $B$ be polynomial rings over a field $k$.
Let $I \subseteq A$ and $J \subseteq B$ be nonzero proper homogeneous ideals. Then
\begin{enumerate}[\quad \rm(i)]
\item $\displaystyle \depth (I+J)^{(n)}/(I+J)^{(n+1)} = \min_{i+j=n}\{\depth I^{(i)}/I^{(i+1)} + \depth J^{(j)}/J^{(j+1)}\},$
\item $\displaystyle \reg (I+J)^{(n)}/(I+J)^{(n+1)} = \max_{i+j=n}\{\reg  I^{(i)}/I^{(i+1)} + \reg J^{(j)}/J^{(j+1)}\}.$
\end{enumerate}}
\medskip

As a consequence of Theorem \ref{main equality}, we show that $R/(I+J)^{(i)}$ is Cohen-Macaulay for all $i \le n$ if and only if $A/I^{(i)}$ and $B/J^{(i)}$ are Cohen-Macaulay for all $i \le n$.
\par

The above results hold in a more general framework.
Given two filtrations of ideals $\{I_n\}_{n \ge 0}$ in $A$ and $\{J_n\}_{n \ge 0}$ in $B$, we give bounds for the depth and the regularity of the binomial sum
$$Q_n := \sum_{i+j = n}I_iJ_j.$$
For the filtrations of ordinary or symbolic powers of the ideals $I$ and $J$,  we have $Q_n = (I+J)^n$ or $Q_n = (I+J)^{(n)}$, respectively. This approach can be also applied to filtrations of integral closures or of saturations of powers of $I$ and $J$.
\par

We say that a filtration of ideals $\{I_n\}_{n \ge 0}$ is {\em Tor-vanishing} if
the induced map $\Tor_i^A(k,I_n) \to \Tor_i^A(k,I_{n-1})$ is zero for all $i \ge 0$ and $n \ge 0$.
We show that the bounds for the depth and the regularity of the binomial sum $Q_n$
become equalities if the filtrations $\{I_n\}_{n \ge 0}$ and $\{J_n\}_{n \ge 0}$ are Tor-vanishing.
Theorems \ref{thm_depthreg_char0monomial} and \ref{monomial} follow from this result because
Tor-vanishing holds for filtrations of symbolic powers in these cases:
\medskip

\noindent{\bf Propositions \ref{mapoftors_symbolicpower} and \ref{function_monomial}}.
{\em Let $I$ be a homogeneous ideal in a polynomial ring $A$ over a field $k$.
If $\chara(k) = 0$ or if $I$ is a monomial ideal, the filtration $\{I^{(n)}\}_{n \ge 0}$ is Tor-vanishing.}
\medskip

The Tor-vanishing of the symbolic powers are of independent interest  because they can be used to investigate homological relationships between $I^{(n-1)}$ and $I^{(n)}$ for a homogeneous ideal $I$.
They can be also considered as a higher order generalization of the inclusion $I^{(n)} \subseteq \mm I^{(n-1)}$, where $\mm$ is the ideal generated by the variables of $R$. This inclusion was proved by Eisenbud and Mazur \cite{EM} under the same assumption of Propositions \ref{mapoftors_symbolicpower} and \ref{function_monomial}. Using an example of \cite{EM} and the polarization trick of McCullough and Peeva \cite{MP} we can find homogeneous ideals whose filtration of symbolic powers is not Tor-vanishing if $\chara(k) > 0$. \par
\smallskip

Our paper is structured as follows.
In Section \ref{sect_assoc} we study the associated primes of tensor products of modules.
In Section \ref{sect_binomialexpansion} we prove the binomial expansion of $(I+J)^{(n)}$.
In Section \ref{sect_depthreg}, we present bounds for the depth and the regularity of $R/(I+J)^{(n)}$ and the exact formulas for the depth and the regularity of $(I+J)^{(n-1)}/(I+J)^{(n)}$ in terms of those of $I$ and $J$.
In Section \ref{sect_splittings} we use the technique of Tor-vanishing to study the problem when the obtained bounds for the depth and the regularity of $R/(I+J)^{(n)}$ become exact formulas.  \par

We assume that the reader is familiar with basic properties of associated primes, depth and regularity, which we use without references. For other unexplained notions and terminology, we refer the reader to \cite{BrH, E}.

\begin{acknowledgement}
H.T. H\`a is partially supported by the Simons Foundation (grant \#279786) and Louisiana Board of Regents (grant \#LEQSF(2017-19)-ENH-TR-25).
H.D. Nguyen is supported by a Marie Curie fellowship of the Istituto Nazionale di Alta Matematica. T.N. Trung is partially supported by Project ICRTM01$\_$2019.01 of the International Centre for Research and Postgraduate Training, Institute of Mathematics, VAST and by Vietnam National Foundation for Science and Technology Development (grant \#101.04-2018.307).
Part of this work was done during a research stay of the authors at Vietnam Institute for Advanced Study in Mathematics.
\end{acknowledgement}


\section{Associated primes of tensor products} \label{sect_assoc}

Let $A$ and $B$ be two Noetherian algebras over a field $k$ such that $R := A \otimes_k B$ is Noetherian.
For our investigation on the symbolic powers of sums of ideals, we need to know the  associated primes of $R$-modules of the form $M\otimes_k N$, where $M$ and $N$ are nonzero finitely generated modules over $A$ and $B$.

Let $\Min_A(M)$ and $\Ass_A(M)$ denote the sets of minimal associated primes and associated primes of $M$ as an $A$-module, respectively.  The aim of this section is to describe $\Min_R(M\otimes_k N)$ and $\Ass_R(M\otimes_k N)$ in terms of those of $M$ and $N$.  \par

We begin with the following observations.

\begin{lemma} \label{contraction}
Let $P$ be a prime ideal of $R$, $\pp = P \cap A$, and $\qq = P \cap B$. Then
\begin{enumerate}[\quad \rm(i)]
\item $P \in \Min_R(M \otimes_k N)$ if and only if $\pp \in \Min_A(M)$, $\qq \in \Min_B(N)$ and $P \in \Min_R (R/\pp+\qq)$;
\item $P \in \Ass_R(M \otimes_k N)$ if and only if $\pp \in \Ass_A(M)$, $\qq \in \Ass_B(N)$ and $P \in \Ass_R (R/\pp+\qq)$.
\end{enumerate}
\end{lemma}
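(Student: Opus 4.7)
The plan is to derive both parts from three structural observations, then handle the Ass statement (ii) via a prime-filtration argument, and upgrade Ass to Min for (i).

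First, the three observations. (O1) Viewing $M\otimes_k N$ only as an $A$-module via any $k$-basis $\{e_\alpha\}$ of $N$ gives an $A$-module isomorphism $M\otimes_k N \cong \bigoplus_\alpha M$, whence $\Ass_A(M\otimes_k N)=\Ass_A M$ and $\supp_A(M\otimes_k N)=\supp_A M$; symmetrically for $B$. (O2) If $\pp\in\Ass_A M$ and $\qq\in\Ass_B N$, the embeddings $A/\pp\hookrightarrow M$ and $B/\qq\hookrightarrow N$ tensor to an $R$-embedding
\[
R/(\pp+\qq) \;=\; A/\pp\otimes_k B/\qq \;\hookrightarrow\; M\otimes_k N,
\]
exactness being automatic since $k$ is a field; in particular $\Ass_R(R/(\pp+\qq))\subseteq\Ass_R(M\otimes_k N)$ and $V(\pp+\qq)\subseteq\supp_R(M\otimes_k N)$. (O3) The ring map $A/\pp\to R/(\pp+\qq)$ is flat (since $B/\qq$ is $k$-flat), and because $A/\pp$ is a domain, every nonzero element of $A/\pp$ is a non-zero-divisor on $R/(\pp+\qq)$. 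Consequently, every $P\in\Ass_R(R/(\pp+\qq))$ must satisfy $P\cap A=\pp$, and symmetrically $P\cap B=\qq$; the Min analogue follows from going-down for the same flat extension.

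For (ii), the direction $(\Leftarrow)$ is immediate from (O2). For $(\Rightarrow)$, let $P=\Ann_R(x)\in\Ass_R(M\otimes_k N)$. Since $P\cap A=\Ann_A(x)$ is a prime $A$-annihilator in $M\otimes_k N$, (O1) forces $\pp:=P\cap A\in\Ass_A M$, and likewise $\qq:=P\cap B\in\Ass_B N$. For the remaining assertion $P\in\Ass_R(R/(\pp+\qq))$, I would fix prime filtrations $0=M_0\subset\cdots\subset M_s=M$ with $M_i/M_{i-1}\cong A/\pp_i$ and similarly $0=N_0\subset\cdots\subset N_t=N$ with $N_j/N_{j-1}\cong B/\qq_j$, then iterate the short exact sequences $0\to M_{i-1}\otimes_k N\to M_i\otimes_k N\to A/\pp_i\otimes_k N\to 0$ (exact because $k$ is a field) and their analogues on the $N$ side to obtain
\[
\Ass_R(M\otimes_k N) \;\subseteq\; \bigcup_{i,j}\Ass_R(R/(\pp_i+\qq_j)).
\]
Thus $P\in\Ass_R(R/(\pp_{i_0}+\qq_{j_0}))$ for some $i_0,j_0$, and (O3) forces $\pp_{i_0}=P\cap A=\pp$ and $\qq_{j_0}=P\cap B=\qq$, giving $P\in\Ass_R(R/(\pp+\qq))$.

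Part (i) I would deduce from (ii) and (O2). $(\Leftarrow)$: $V(\pp+\qq)\subseteq\supp_R(M\otimes_k N)$; any $P'\subsetneq P$ in this support satisfies $P'\cap A\subseteq\pp\in\Min_A M$ with $P'\cap A\in\supp_A M$, hence $P'\cap A=\pp$, and symmetrically $P'\cap B=\qq$, so $P'\supseteq\pp+\qq$, contradicting $P\in\Min_R(R/(\pp+\qq))$. $(\Rightarrow)$: If $P\in\Min_R(M\otimes_k N)$ had $\pp=P\cap A\notin\Min_A M$, I would pick $\pp_0\in\Min_A M$ with $\pp_0\subsetneq\pp$ and then choose $P_0\subseteq P$ minimal over $\pp_0+\qq$ in $R$; by (O2) and (ii), $P_0\in\Min_R(R/(\pp_0+\qq))\subseteq\supp_R(M\otimes_k N)$, while (O3) applied to $(\pp_0,\qq)$ gives $P_0\cap A=\pp_0\subsetneq\pp$, so $P_0\subsetneq P$, contradicting minimality of $P$. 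A symmetric argument gives $\qq\in\Min_B N$, and $P\in\Min_R(R/(\pp+\qq))$ follows since any prime of $V(\pp+\qq)$ strictly below $P$ would sit in $\supp_R(M\otimes_k N)$. The most delicate ingredient is the $\subseteq$ in the Ass formula: a raw prime filtration only indexes over $\supp$ rather than $\Ass$, and (O3) is exactly what is needed to pin down the correct associated pair $(\pp,\qq)$.
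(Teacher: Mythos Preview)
Your proof is correct and takes a genuinely different route from the paper's. The paper proves both parts in parallel by quoting dimension and depth formulas for flat base change from EGA~IV: using the flat map $A_\pp\to R_P$ one gets
\[
\dim(M\otimes_k N)_P=\dim M_\pp+\dim\bigl((A/\pp)\otimes_k N\bigr)_P,
\]
and iterating with $B\to R$ gives $\dim(M\otimes_k N)_P=\dim M_\pp+\dim N_\qq+\dim(R/(\pp+\qq))_P$. Part (i) then drops out from ``$P\in\Min$ iff the local dimension is $0$'', and (ii) is the identical computation with depth in place of dimension. Your approach replaces these citations with elementary commutative algebra: the direct-sum description of $M\otimes_k N$ over $A$, the explicit embedding $R/(\pp+\qq)\hookrightarrow M\otimes_k N$, a prime-filtration bound on $\Ass_R(M\otimes_k N)$, and the freeness of $R/(\pp+\qq)$ over the domain $A/\pp$ to control contractions. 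The paper's method is shorter and treats Min and Ass symmetrically; yours is self-contained and avoids EGA, but is asymmetric in that (i) is deduced from (ii). One point worth making explicit: in your (i)$(\Rightarrow)$ step, invoking (O2) for the pair $(\pp_0,\qq)$ requires $\qq\in\Ass_B N$, which you only know because you have already proved (ii) and applied it to $P\in\Min_R(M\otimes_k N)\subseteq\Ass_R(M\otimes_k N)$; your parenthetical ``by (O2) and (ii)'' is correct but deserves this unpacking.
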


\begin{proof}
It is clear that $(M \otimes_k N)_P = M_\pp \otimes_{A_\pp} (A \otimes_k N)_P.$
Since the map $A \to R$ is flat, the map $A_\pp \to R_P$ is also flat.
Applying \cite[Chap. IV, (6.1.2)]{Gr}, we have
$$\dim (M\otimes_k N)_P   =  \dim M_\pp + \dim k(\pp) \otimes_{A_\pp} (A \otimes_k N)_P,$$
where $k(\pp)$ denotes the residue field of $A_\pp$.
Since $k(\pp) = (A/\pp)_\pp$, we have
$((A/\pp) \otimes_k N)_P =  k(\pp) \otimes_{A_\pp} (A \otimes_k N)_P$. Therefore,
$$\dim (M\otimes_k N)_P   =  \dim M_\pp + \dim ((A/\pp) \otimes_k N)_P.$$
Since the map $B \to R$ is flat, we can also show that
$$\dim ((A/\pp) \otimes_k N)_P = \dim N_\qq + \dim ((A/\pp) \otimes_k (B/\qq))_P.$$

Note that $(A/\pp) \otimes_k (B/\qq) = R/\pp +\qq$. From the above equalities we get
$\dim (M\otimes_k N)_P = 0$ if and only if
$$\dim M_\pp = \dim N_\qq = \dim (R/\pp+\qq)_P = 0.$$
For an arbitrary finite $R$-module $E$, we know that $P \in \Min_R(E)$ if and only if $\dim E_P = 0$.
Therefore, $P \in \Min_R(M \otimes_k N)$ if and only if $\pp \in \Min_A(M)$, $\qq \in \Min_B(N)$ and $P \in \Min_R(R/\pp+\qq)$. \par

Similarly, we can apply \cite[Chap. IV, (6.3.1)]{Gr} \label{Gr} to show that $\depth (M\otimes_k N)_P = 0$ if and only if
$$\depth M_\pp = \depth N_\qq = \depth (R/\pp+\qq)_P = 0.$$
We also know that $P \in \Ass_R(E)$ if and only if $\depth E_P = 0$.
Therefore, $P \in \Ass_R(M \otimes_k N)$ if and only if $\pp \in \Ass_A(M)$, $\qq \in \Ass_B(N)$ and $P \in \Ass_R(R/\pp+\qq)$.
\end{proof}

\begin{remark}
We need the assumption on the Noetherian property of $A \otimes_k B$ for applying \cite[Chap. IV, (6.1.2) and (6.3.1)]{Gr}. In general, $A \otimes_k B$ is not necessarily Noetherian, even when $A$ and $B$ are field extensions of $k$. For more information on this topic see \cite{Va}.
\end{remark}

Notice that $\pp +\qq$ is not necessarily a prime or even a primary ideal as illustrated by the following example.

\begin{example} \label{not prime} Let $\pp :=(x^2+1) \subset A = \QQ[x]$ and $\qq := (y^2+1)\subset B = \QQ[y]$. Both $\pp$ and $\qq$ are prime ideals. However, $\pp+\qq = (x^2+1,y^2+1)$ is not primary in $R = \QQ[x,y]$ because $x^2-y^2 = (x+y)(x-y) \in \pp+\qq$.
\end{example}

\begin{lemma} \label{unmixed}
Let $\pp$ and $\qq$ be prime ideals of $A$ and $B$, respectively.
Let $P\in \Ass(R/\pp+\qq)$. Then
\begin{enumerate}[\quad \rm(i)]
\item $P \cap A = \pp$ and $P \cap B = \qq$,
\item $P \in \Min_R(R/\pp+\qq)$.
\end{enumerate}
\end{lemma}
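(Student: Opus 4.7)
The plan for part (i) is a direct application of Lemma~\ref{contraction}(ii) with $M = A/\pp$ and $N = B/\qq$, so that $M\otimes_k N = R/(\pp+\qq)$. Since $\Ass_A(A/\pp) = \{\pp\}$ and $\Ass_B(B/\qq) = \{\qq\}$, the criterion of Lemma~\ref{contraction}(ii) forces $P\cap A = \pp$ and $P\cap B = \qq$.

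For part (ii), a naive application of Lemma~\ref{contraction}(i) to the same $M,N$ only recovers the tautological condition $P\in\Min_R(R/(\pp+\qq))$, so the plan is instead to localize so as to reduce to a tensor product of fields. Let $W\subset R$ be the multiplicative set generated by the images of $A\setminus\pp$ and $B\setminus\qq$. Part~(i) gives $P\cap W = \emptyset$, hence $P$ corresponds to a prime $\widetilde P$ of the Noetherian localization $W^{-1}(R/(\pp+\qq)) = K(\pp)\otimes_k K(\qq)$. By transitivity of localization, $(R/(\pp+\qq))_P = (K(\pp)\otimes_k K(\qq))_{\widetilde P}$, so $P$ is associated (respectively, minimal) in $R/(\pp+\qq)$ if and only if $\widetilde P$ is associated (resp.\ minimal) in $K(\pp)\otimes_k K(\qq)$.

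The proof therefore reduces to showing that a Noetherian ring $K\otimes_k L$, with $K,L$ field extensions of $k$, has $\Ass\subseteq\Min$, i.e., satisfies Serre's condition $S_1$. I expect this to be the principal technical obstacle. A natural strategy is flat descent of $S_1$: $K\otimes_k L$ is free (hence flat) over both $K$ and $L$, and one can reduce the $S_1$ property to that of fibers of the flat maps, ultimately invoking the classical fact (cf.\ EGA IV, \S6) that Noetherian tensor products of field extensions are locally Cohen--Macaulay. With this claim in hand, $\widetilde P$ is minimal in $K(\pp)\otimes_k K(\qq)$, and the localization correspondence transfers minimality back to $P$, completing the proof.
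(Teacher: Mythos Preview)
Your proof of part~(i) is identical to the paper's, and your reduction in part~(ii) via localization at the multiplicative set generated by $A\setminus\pp$ and $B\setminus\qq$ to a statement about the tensor product of residue fields $k(\pp)\otimes_k k(\qq)$ is exactly the paper's strategy as well. The only difference is the technical input used to conclude that every associated prime of $k(\pp)\otimes_k k(\qq)$ is minimal. The paper cites a result of O'Carroll and Qureshi \cite[Theorem~3]{OQ} stating that any localization of a tensor product of two field extensions at a prime is a \emph{primary ring} (every zerodivisor is nilpotent), which immediately gives minimality of $\widetilde P$.

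Your alternative, arguing via Serre's condition $S_1$ or local Cohen--Macaulayness, aims at an equivalent fact, but the justification you sketch has a gap. The flat descent you propose is circular: the fibre of $K\to K\otimes_k L$ over the unique point of $\spec K$ is $K\otimes_k L$ itself, so ``reducing $S_1$ to the fibres'' reduces nothing. Likewise, the depth and dimension formulas of EGA~IV, \S6 applied to this flat map are tautologies. The statement that Noetherian tensor products of field extensions are locally Cohen--Macaulay is true, but it is not a direct consequence of the EGA results you cite; the cleanest reference is precisely the O'Carroll--Qureshi theorem the paper uses (which in fact only gives the weaker primary-ring property, but that is all that is needed). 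So your outline is correct in architecture, but you should replace the flat-descent sketch by a direct citation of \cite{OQ} or an equivalent source.
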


\begin{proof}
Note that $R/\pp+\qq = (A/\pp) \otimes_k (B/\qq)$.
Applying Lemma \ref{contraction} (ii) to $(A/\pp) \otimes_k (B/\qq)$
we obtain $P \cap A \in \Ass_A(A/\pp) = \{\pp\}$ and $P \cap B \in \Ass_B(B/\pp) = \{\pp\}$, which implies (i).

Let $k(\pp)$ and $k(\qq)$ denote the residue fields of $A_\pp$ and $B_\qq$.
Because of (i) we can consider $((A/\pp) \otimes_k (B/\qq))_P$ as
a localization of the algebra $k(\pp) \otimes_k k(\qq)$ at a prime ideal $P'$.
Since $P$ is an associated prime of $(A/\pp) \otimes_k (B/\qq)$,
$P'$ is an associated prime of $k(\pp) \otimes_k k(\qq)$.
By \cite[Theorem 3]{OQ}, $(k(\pp) \otimes_k k(\qq))_{P'}$ is a primary ring, i.e.  any of its zero divisors is a nilpotent element. From this it follows that $P'$ is a minimal associated prime of $k(\pp) \otimes_k k(\qq)$.
Hence, $P$ must be a minimal associated prime of $(A/\pp) \otimes_k (B/\qq)$, which proves (ii).
\end{proof}

By Lemma \ref{unmixed}(ii), the ideal $\pp +\qq$ is always unmixed though it may be not a primary ideal. \par

Now we can describe the associated and the minimal associated primes of $M \otimes_k N$ in terms of $M$ and $N$ as follows. This description gives more precise information on $\Ass_R(M \otimes_k N)$ than \cite[Corollary 3.7(1)]{STY}.

\begin{theorem} \label{asso}
Let $M$ and $N$ be nonzero modules over $A$ and $B$, respectively. Then
\begin{enumerate}[\quad \rm(i)]
 \item $\Min_R(M \otimes_k N) = \displaystyle \bigcup_{\begin{subarray}{l} \pp\in \Min_A(M)\\
 \qq \in \Min_B(N)\end{subarray}} \Min_R(R/\pp+\qq).$
 \item $\Ass_R(M \otimes_k N) = \displaystyle \bigcup_{\begin{subarray}{l} \pp\in \Ass_A(M)\\
 \qq \in \Ass_B(N)\end{subarray}} \Min_R(R/\pp+\qq).$
\end{enumerate}
\end{theorem}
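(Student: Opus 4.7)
The plan is to derive both equalities as direct bookkeeping exercises, using Lemma \ref{contraction} and Lemma \ref{unmixed} as the two workhorses. The real content of the theorem is already encoded in these lemmas; what remains is to reconcile the appearance of $\Min_R(R/\pp+\qq)$ in the right-hand side (and the use of $\Min$ rather than $\Ass$ in (ii)).

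For (i), I would first observe that the inclusion $\subseteq$ is immediate from Lemma \ref{contraction}(i): any $P\in \Min_R(M\otimes_k N)$ satisfies $\pp := P\cap A\in \Min_A(M)$, $\qq := P\cap B \in \Min_B(N)$, and $P \in \Min_R(R/\pp+\qq)$, which places $P$ in the right-hand side. Conversely, suppose $\pp \in \Min_A(M)$, $\qq \in \Min_B(N)$, and $P \in \Min_R(R/\pp+\qq)$. Since minimal primes are associated, $P \in \Ass_R(R/\pp+\qq)$, so Lemma \ref{unmixed}(i) yields $P\cap A = \pp$ and $P\cap B = \qq$. The three hypotheses of Lemma \ref{contraction}(i) are therefore met, so $P \in \Min_R(M\otimes_k N)$.

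For (ii), the argument runs in parallel. The inclusion $\subseteq$ uses Lemma \ref{contraction}(ii), together with Lemma \ref{unmixed}(ii) to replace $\Ass_R(R/\pp+\qq)$ by $\Min_R(R/\pp+\qq)$ in the conclusion. For the reverse inclusion, given $\pp \in \Ass_A(M)$, $\qq \in \Ass_B(N)$, and $P \in \Min_R(R/\pp+\qq)$, I again invoke $\Min \subseteq \Ass$ together with Lemma \ref{unmixed}(i) to secure the contractions $P\cap A = \pp$, $P\cap B = \qq$, and then apply Lemma \ref{contraction}(ii) to conclude $P \in \Ass_R(M\otimes_k N)$.

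The only conceptual subtlety — and the main place where the proof could go wrong — is that $\pp+\qq$ need not be prime or even primary (as Example \ref{not prime} illustrates), so a priori one must distinguish $\Min_R(R/\pp+\qq)$ from $\Ass_R(R/\pp+\qq)$ in the right-hand side of (ii). Lemma \ref{unmixed}(ii) precisely removes this obstacle by asserting the two sets coincide; once this is in hand, the theorem reduces to assembling the lemmas, with no further calculation required.
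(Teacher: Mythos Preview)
Your proof is correct and is essentially the same as the paper's: both assemble Lemma~\ref{contraction} and Lemma~\ref{unmixed} to obtain the two equalities, with Lemma~\ref{unmixed}(i) removing the contraction conditions and Lemma~\ref{unmixed}(ii) collapsing $\Ass_R(R/\pp+\qq)$ to $\Min_R(R/\pp+\qq)$. The only difference is cosmetic---you argue by double inclusion, while the paper writes the union with explicit contraction constraints and then simplifies---but the logical content is identical.
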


\begin{proof}
By Lemma \ref{contraction}, we have
$$\Min_R(M \otimes_k N)  = \bigcup_{\begin{subarray}{l} \pp\in \Min_A(M)\\
 \qq \in \Min_B(N)\end{subarray}} \{P \in \Min_R(R/\pp+\qq) \big|   P \cap A = \pp, P \cap B = \qq\}.$$
$$\Ass_R(M \otimes_k N)  = \bigcup_{\begin{subarray}{l} \pp\in \Ass_A(M)\\
 \qq \in \Ass_B(N)\end{subarray}} \{P \in \Ass_R(R/\pp+\qq) \big|  P \cap A = \pp, P \cap B = \qq\}.$$
By Lemma \ref{unmixed},  we have $P \cap A = \pp, P \cap B = \qq$ for all $P \in \Ass_R(R/\pp+\qq)$ and
$$\Ass_R(R/\pp+\qq) = \Min_R(R/\pp+\qq).$$
Hence, we can rewrite the above formulas as in the statement of the theorem.
\end{proof}

The following immediate consequence of Theorem \ref{asso} is a generalization of a classical result of Seidenberg  on unmixed polynomial ideals under base field extensions in \cite{Se}.

\begin{corollary}\label{embedded}
$\Ass_R(M \otimes_k N) = \Min_R(M \otimes_k N)$ if and only if $\Ass_A(M) = \Min_A(M)$ and $\Ass_B(N) = \Min_B(N)$.
\end{corollary}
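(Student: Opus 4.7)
The plan is to deduce the corollary directly from Theorem \ref{asso} combined with Lemma \ref{unmixed}. The backward implication is essentially immediate: assuming $\Ass_A(M) = \Min_A(M)$ and $\Ass_B(N) = \Min_B(N)$, the index sets of the two unions in Theorem \ref{asso}(i) and (ii) coincide, so the unions themselves coincide, yielding $\Ass_R(M\otimes_k N) = \Min_R(M\otimes_k N)$.

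For the forward implication, I would argue by contrapositive. Suppose, without loss of generality, that there exists $\pp \in \Ass_A(M) \setminus \Min_A(M)$. Choose any $\qq \in \Min_B(N) \subseteq \Ass_B(N)$, which exists because $N \ne 0$. Since $A/\pp$ and $B/\qq$ are nonzero $k$-algebras, their tensor product $R/(\pp+\qq) = A/\pp \otimes_k B/\qq$ is nonzero, so there exists a prime $P \in \Min_R(R/\pp+\qq)$. By Theorem \ref{asso}(ii), $P \in \Ass_R(M\otimes_k N)$.

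Now assuming $\Ass_R(M\otimes_k N) = \Min_R(M\otimes_k N)$, Theorem \ref{asso}(i) would produce $\pp' \in \Min_A(M)$ and $\qq' \in \Min_B(N)$ with $P \in \Min_R(R/\pp'+\qq') \subseteq \Ass_R(R/\pp'+\qq')$. Applying Lemma \ref{unmixed}(i) to both $P \in \Ass_R(R/\pp+\qq)$ and $P \in \Ass_R(R/\pp'+\qq')$ gives $\pp = P\cap A = \pp'$, contradicting $\pp \notin \Min_A(M)$. The symmetric argument handles $\Ass_B(N) \ne \Min_B(N)$. I don't anticipate any real obstacle here: all the content of the corollary is already packaged in Theorem \ref{asso} and Lemma \ref{unmixed}, and the only care required is to verify that $\Min_R(R/\pp+\qq)$ is nonempty, which is ensured by the fact that a tensor product of nonzero $k$-algebras is nonzero.
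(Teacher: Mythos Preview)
Your proof is correct and is exactly the argument implicit in the paper, which records the corollary as an ``immediate consequence of Theorem \ref{asso}'' without writing any details. The only addition you make is to invoke Lemma \ref{unmixed}(i) explicitly to recover $\pp = P\cap A = \pp'$ in the forward direction, which is precisely the hidden step.
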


One may ask  when is the sum $\pp + \qq$ of two prime ideals $\pp \subset A$ and $\qq \subset B$ a prime ideal in $R$? This question has the following simple answer.

\begin{lemma}
Let $k(\pp)$ and $k(\qq)$ denote the fields of fractions of $A/\pp$ and $B/\qq$.
Then $\pp + \qq$ is a prime ideal if and only if $k(\pp)\otimes_kk(\qq)$ is a domain.
\end{lemma}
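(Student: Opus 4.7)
The plan is to reduce the statement to an equivalence between the tensor product of two integral domains over $k$ and the tensor product of their fields of fractions. First I would observe the ring isomorphism $R/(\pp+\qq) \cong (A/\pp)\otimes_k (B/\qq)$, so that $\pp+\qq$ is prime exactly when the right-hand tensor product is an integral domain. The proof thus reduces to establishing: $(A/\pp)\otimes_k (B/\qq)$ is a domain if and only if $k(\pp)\otimes_k k(\qq)$ is a domain.

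For the ``only if'' direction I would exhibit $k(\pp)\otimes_k k(\qq)$ as a localization of $(A/\pp)\otimes_k (B/\qq)$. Concretely, localize $(A/\pp)\otimes_k (B/\qq)$ at the multiplicative set $S$ generated by elements of the form $s\otimes 1$ and $1\otimes t$ with $0\ne s\in A/\pp$ and $0\ne t\in B/\qq$. The universal property of tensor products and localizations identifies this localization with $k(\pp)\otimes_k k(\qq)$. Since a localization of a domain is a domain, one direction follows.

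For the ``if'' direction I would use that $k$ is a field, so every $k$-module is flat. The injections $A/\pp\hookrightarrow k(\pp)$ and $B/\qq\hookrightarrow k(\qq)$ then remain injective after tensoring, first with $A/\pp$ and then with $k(\qq)$, yielding an injection
\[
(A/\pp)\otimes_k (B/\qq)\ \hookrightarrow\ k(\pp)\otimes_k k(\qq).
\]
A subring of a domain is a domain, which gives the converse.

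I do not anticipate a genuine obstacle: both steps are standard flatness and localization arguments. The only point worth a little care is the precise identification of $k(\pp)\otimes_k k(\qq)$ with the localization of $(A/\pp)\otimes_k (B/\qq)$ at $S$, which requires checking that the two multiplicative systems ``on each side'' of the tensor combine to give the expected total ring of fractions.
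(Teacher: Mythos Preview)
Your proof is correct and follows essentially the same approach as the paper: both directions rely on the identification $R/(\pp+\qq)\cong (A/\pp)\otimes_k(B/\qq)$, with one direction using that $k(\pp)\otimes_k k(\qq)$ is a localization of this ring and the other using the flatness-induced injection $(A/\pp)\otimes_k(B/\qq)\hookrightarrow k(\pp)\otimes_k k(\qq)$. Your write-up is somewhat more explicit about the multiplicative set and the flatness step, but the argument is the same.
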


\begin{proof}
Let $\pp+\qq$ be a prime ideal. Then $(A/\pp) \otimes_k (B/\qq) = R/\pp+\qq$ is a domain.
Since $k(\pp)\otimes_kk(\qq)$ is a localization of $(A/\pp) \otimes_k (B/\qq)$, it must be a domain, too. The converse is true since we have an injection $(A/\pp) \otimes_k (B/\qq) \to k(\pp)\otimes_k k(\qq)$.
\end{proof}

By \cite[Corollary 1, p. 198]{ZS}, the tensor product of two field extensions of $k$ is a domain if $k$ is algebraically closed. In this case, Theorem \ref{asso} can be reformulated as follows.

\begin{corollary}\label{closed}
Let  $k$ be an algebraically closed field. Then
\begin{enumerate}[\quad \rm(i)]
 \item $\Ass_R(M \otimes_k N) =\{\pp+\qq \mid \pp\in \Ass_A (M) \text{ and } \qq \in \Ass_B (N)\}$,
 \item $\Min_R(M \otimes_k N) =\{\pp+\qq \mid \pp\in \Min_A (M) \text{ and } \qq \in \Min_B (N)\}$.
\end{enumerate}
\end{corollary}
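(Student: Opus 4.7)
The plan is to deduce the corollary directly from Theorem \ref{asso} combined with the lemma immediately preceding the corollary. By Theorem \ref{asso}, both $\Ass_R(M \otimes_k N)$ and $\Min_R(M \otimes_k N)$ are already expressed as unions of the sets $\Min_R(R/\pp + \qq)$, indexed respectively by pairs $(\pp,\qq)$ with $\pp \in \Ass_A(M)$, $\qq \in \Ass_B(N)$ (resp.\ by pairs of minimal primes). So the whole task reduces to showing that, under the algebraic closedness of $k$, the ideal $\pp + \qq$ is itself a prime ideal of $R$ for every prime $\pp \subset A$ and every prime $\qq \subset B$; in that case $\Min_R(R/\pp + \qq) = \{\pp + \qq\}$, and substituting this into the formulas of Theorem \ref{asso} yields exactly the asserted descriptions.

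To verify primeness of $\pp + \qq$, I would invoke the lemma stated just above the corollary: $\pp + \qq$ is prime in $R$ if and only if the tensor product of residue fields $k(\pp) \otimes_k k(\qq)$ is a domain. Since $k$ is algebraically closed, the classical result \cite[Corollary 1, p. 198]{ZS} guarantees that the tensor product of any two field extensions of $k$ is a domain. Hence $k(\pp) \otimes_k k(\qq)$ is a domain, and $\pp + \qq$ is prime.

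There is essentially no obstacle to overcome: the deep work has already been carried out in Theorem \ref{asso} (describing $\Min$ and $\Ass$ of the tensor product via the prime-pair components), in Lemma \ref{unmixed} (which justified rewriting everything in terms of $\Min_R(R/\pp + \qq)$), and in the preceding lemma (which links primeness of $\pp + \qq$ to domain-ness of the tensor product of residue fields). The only external input is the classical fact that, over an algebraically closed ground field, tensor products of field extensions have no nonzero nilpotents or zero divisors; once this is in hand, the corollary is a direct specialization of Theorem \ref{asso} obtained by collapsing each set $\Min_R(R/\pp + \qq)$ to the single point $\{\pp + \qq\}$.
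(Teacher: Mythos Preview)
Your proposal is correct and follows essentially the same approach as the paper: invoke \cite[Corollary 1, p.~198]{ZS} to conclude that $k(\pp)\otimes_k k(\qq)$ is a domain when $k$ is algebraically closed, apply the preceding lemma to get that $\pp+\qq$ is prime, and then specialize Theorem~\ref{asso} by replacing each $\Min_R(R/\pp+\qq)$ with $\{\pp+\qq\}$.
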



\section{Binomial expansion of symbolic powers} \label{sect_binomialexpansion}

Let $A$ and $B$ be two commutative Noetherian algebras over a field $k$ such that $R = A \otimes_k B$ is also a Noetherian ring.
Let $I$ and $J$ be nonzero proper ideals of $A$ and $B$, respectively.
We will use the same symbols $I,J$ for the extensions of $I,J$ in $R$.
The aim of this section is to prove that the symbolic power $(I+J)^{(n)}$ has a binomial expansion. \par

We shall need the following observations.

\begin{lemma} \label{HoaTam}
$I \cap J = IJ$.
\end{lemma}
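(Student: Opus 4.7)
The inclusion $IJ \subseteq I \cap J$ is automatic, so the content of the lemma is the reverse inclusion. The plan is to identify $(I \cap J)/IJ$ with $\Tor_1^R(R/I, R/J)$ and then to show this Tor vanishes by exploiting the tensor-product structure $R = A \otimes_k B$ over the field $k$.

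To obtain the identification, I would apply the functor $- \otimes_R (R/J)$ to the short exact sequence $0 \to I \to R \to R/I \to 0$. Since $\Tor_1^R(R, R/J) = 0$, the resulting long exact sequence of Tor collapses to
$$0 \to \Tor_1^R(R/I, R/J) \to I/IJ \to R/J \to R/(I+J) \to 0,$$
where I use the standard identification $I \otimes_R R/J = I/IJ$. The kernel of $R/J \to R/(I+J)$ is $(I+J)/J \cong I/(I \cap J)$, so the kernel of $I/IJ \to R/J$ is precisely $(I \cap J)/IJ$. Exactness therefore gives $\Tor_1^R(R/I, R/J) \cong (I \cap J)/IJ$, and it suffices to show the Tor vanishes.

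For the Tor computation, I would pick a free $A$-resolution $F_\bullet \to A/I$. Because $k$ is a field, the functor $- \otimes_k B$ is exact and sends free $A$-modules to free $R$-modules, so $F_\bullet \otimes_k B$ is a free $R$-resolution of $R/I = A/I \otimes_k B$. A direct check on a free summand shows $F_i \otimes_k B \otimes_R R/J \cong F_i \otimes_k B/J$, so the complex computing $\Tor_\bullet^R(R/I, R/J)$ simplifies to $F_\bullet \otimes_k B/J$. Since $F_\bullet$ is exact in positive degrees and $- \otimes_k B/J$ is exact (again because $k$ is a field), all higher Tors vanish; in particular $\Tor_1^R(R/I, R/J) = 0$, which yields $I \cap J = IJ$.

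I do not expect a serious obstacle: the whole argument is a standard Tor-vanishing calculation of Künneth type. The only care-points are the correct identification of the connecting map $\Tor_1^R(R/I, R/J) \to I/IJ$ in the long exact sequence, and the observation that flatness of $B$ and $B/J$ over $k$ comes for free since $k$ is a field.
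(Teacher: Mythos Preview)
Your argument is correct. It differs genuinely from the paper's proof, which is a bare-hands $k$-linear algebra computation: the paper chooses $k$-bases $V$ of $I$ and $W$ of $J$, extends them to bases $V^*$ of $A$ and $W^*$ of $B$, observes that $V\otimes W^*$ and $V^*\otimes W$ are $k$-bases of the extensions $I\otimes_k B$ and $A\otimes_k J$ inside the basis $V^*\otimes W^*$ of $R$, and concludes that $I\cap J$ has basis $(V\otimes W^*)\cap(V^*\otimes W)=V\otimes W$, which is also a basis of $IJ=I\otimes_k J$. Your route instead identifies $(I\cap J)/IJ$ with $\Tor_1^R(R/I,R/J)$ and kills the Tor by a K\"unneth-type argument, pulling back a free $A$-resolution of $A/I$ along $-\otimes_k B$. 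The paper's approach is more elementary and entirely self-contained; yours is more conceptual and in fact yields the stronger statement $\Tor_i^R(R/I,R/J)=0$ for all $i\ge 1$, which is not needed here but is a pleasant bonus.
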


\begin{proof}
Let $V$ and $W$ be two sets of elements of $A$ and $B$.
We denote by $V \otimes W$ the set of the elements $f \otimes g$, $f \in V$ and $g \in W$.
Choose bases $V$ and $W$ of the $k$-vector spaces $I$ and $J$ and extend them to bases $V^*$ and $W^*$ of $A$ and $B$, respectively.
Then $V \otimes W^*$ and $V^* \otimes W$ are bases of the $k$-vector spaces $I \otimes_k B$ and $A \otimes_k J$, respectively.
Since $V \otimes W^*$ and $V^* \otimes W$ are subsets of $V^* \otimes W^*$, which is a basis of $A \otimes_k B$, the vector space
$I \cap J = (I \otimes_k B) \cap (A \otimes_k J)$ is generated by the set
$(V \otimes W^*) \cap (V^* \otimes W) = V \otimes W.$
Since $IJ = I \otimes_kJ$ is also generated by $V \otimes W$, we conclude that $I \cap J = IJ$.
\end{proof}

Lemma \ref{HoaTam} was known before for polynomial ideals \cite[Lemma 1.1]{HT}.

\begin{lemma} \label{tensor}
Let $I'$ and $J'$ be subideals of $I$ and $J$, respectively. Then
$$(I/I') \otimes_k (J/J') \cong IJ /(IJ' + I'J).$$
\end{lemma}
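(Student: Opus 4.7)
The plan proceeds in three steps, exploiting the fact that $k$ is a field and hence every $k$-module is flat. First, I would apply $-\otimes_k(J/J')$ to the short exact sequence $0 \to I' \to I \to I/I' \to 0$ and $I\otimes_k-$ to $0 \to J' \to J \to J/J' \to 0$. Because tensoring over $k$ is exact, I obtain canonical isomorphisms
$$(I/I') \otimes_k (J/J') \;\cong\; (I\otimes_k J)\big/\bigl(I\otimes_k J' + I'\otimes_k J\bigr),$$
where both summands on the right are regarded as subspaces of $I\otimes_kJ$ via the injections induced by the inclusions $J' \hookrightarrow J$ and $I' \hookrightarrow I$ (these are injections because of flatness).

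Second, I would identify $I\otimes_k J$ with the ideal $IJ \subseteq R$ via the multiplication map $\mu\colon I\otimes_kJ \to R$, $f\otimes g \mapsto fg$. Its image is manifestly the product ideal $IJ$. For injectivity I repeat the basis argument already used in the proof of Lemma \ref{HoaTam}: choose $k$-bases $V, W$ of $I, J$ and extend them to $k$-bases $V^*, W^*$ of $A, B$; then $V\otimes W$ is a $k$-basis of $I\otimes_kJ$, and its image under $\mu$ is contained in the linearly independent set $V^*\otimes W^* \subset R$. The very same identification carries $I\otimes_kJ'$ and $I'\otimes_kJ$ onto $IJ'$ and $I'J$ respectively, since choosing compatible bases $W' \subset W^*$ of $J'$ and $V' \subset V^*$ of $I'$ realizes the inclusions of tensor products as inclusions of spans of the corresponding basis subsets.

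Finally, combining the two steps transports the quotient $(I\otimes_k J)/(I\otimes_k J' + I'\otimes_k J)$ to $IJ/(IJ' + I'J)$, giving the claimed isomorphism. This is an isomorphism of $R$-modules because $\mu$ is $R$-linear, where $R = A\otimes_kB$ acts on $I\otimes_kJ$ componentwise. The only step that deserves any care is checking that the subspace sum on the left maps exactly onto the ideal sum on the right, but this is immediate from the linearity of $\mu$ and the fact that it sends each summand bijectively onto its image. I do not anticipate any substantive obstacle: once the basis technique of Lemma \ref{HoaTam} is at hand, everything reduces to formal manipulation of exact sequences over the field $k$.
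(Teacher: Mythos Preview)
Your proposal is correct and follows essentially the same approach as the paper: both arguments use exactness of $\otimes_k$ together with the identification $I\otimes_kJ\cong IJ$ via the basis trick from Lemma~\ref{HoaTam}. Your route is in fact a little more direct, since you pass immediately to the quotient by the sum $I\otimes_kJ'+I'\otimes_kJ$, whereas the paper performs the quotient in two stages and then invokes Lemma~\ref{HoaTam} again (in the form $I'J\cap IJ'=I'J'$) to collapse the double quotient $(IJ/IJ')/(I'J/I'J')$ into $IJ/(IJ'+I'J)$.
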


\begin{proof} We have
\begin{align*}
(I/I') \otimes_k (J/J') & \cong \big((I \otimes_k J)/(I \otimes_kJ')\big)/\big((I' \otimes_k J)/(I'\otimes_kJ')\big).\\
& \cong (IJ/ IJ')/(I'J/I'J').
\end{align*}
By Lemma \ref{HoaTam},
$$I'J'=I'\cap J'=I'J\cap J' = I'J\cap IJ'.$$
From this it follows that
$$ (I/I') \otimes_k (J/J') \cong (IJ/ IJ')\big/\big((IJ' + I'J)/IJ'\big) \cong IJ \big/(IJ' + IJ').$$
\end{proof}

In the following, we will consider binomial sums of filtrations of ideals, which is defined as follows. \par

For simplicity, we call a sequence of ideals $\{Q_n\}_{n \ge 0}$ in $R$ a {\em filtration} if it satisfies the following conditions:
\begin{enumerate}
\item $Q_0 = R$,
\item $Q_1$ is a nonzero proper ideal,
\item $Q_n \supseteq Q_{n+1}$ for all $n \ge 0$.
\end{enumerate}

Examples of such filtrations are the ordinary powers $\{Q^n\}_{n \ge 0}$, the symbolic powers $\{Q^{(n)}\}_{n \ge 0}$,  and the integral closures of powers $\{\overline{Q^n}\}_{n \ge 0}$, where $Q$ is a nonzero proper ideal.

Let $\{I_i\}_{i \ge 0}$ and $\{J_j\}_{j \ge 0}$ be two filtrations of ideals in $A$ and $B$, respectively. For each $n\ge 0$, we define
$$Q_n := \sum_{i+j=n}I_iJ_j.$$
We call $Q_n$ the $n$-th \emph{binomial sum} of the filtrations $\{I_i\}_{i \ge 0}$ and $\{J_j\}_{j \ge 0}$.

Our next result shows that quotients of successive binomial sums have a nice decomposition.

\begin{proposition} \label{decomposition}
For any integer $n \ge 0$, there is an isomorphism
$$\displaystyle Q_n/Q_{n+1}
\cong \bigoplus_{i=0}^n (I_i/I_{i+1}) \otimes_k (J_{n-i}/J_{n-i+1}).$$
\end{proposition}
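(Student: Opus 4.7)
The plan is to exhibit a natural surjection $\phi$ from the right-hand side to $Q_n/Q_{n+1}$ and then verify its injectivity by splitting the underlying $k$-vector space filtrations of $A$ and $B$. For the first step, Lemma~\ref{tensor} applied to the pairs $(I_i, I_{i+1})$ and $(J_{n-i}, J_{n-i+1})$ gives, for each $i \in \{0, \ldots, n\}$, an isomorphism
\[
(I_i/I_{i+1}) \otimes_k (J_{n-i}/J_{n-i+1}) \;\cong\; I_iJ_{n-i}\big/\bigl(I_iJ_{n-i+1} + I_{i+1}J_{n-i}\bigr).
\]
Since $I_iJ_{n-i+1} + I_{i+1}J_{n-i} \subseteq Q_{n+1}$ and $I_iJ_{n-i} \subseteq Q_n$, composition with the inclusion and projection yields a $k$-linear map $(I_i/I_{i+1}) \otimes_k (J_{n-i}/J_{n-i+1}) \to Q_n/Q_{n+1}$; summing over $i$ defines $\phi$, and surjectivity is immediate from the very definition of $Q_n$.

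For injectivity I would pass to an explicit $k$-vector space decomposition. Since $k$ is a field, I can choose complements $C_i$ of $I_{i+1}$ in $I_i$ for $i = 0, 1, \ldots, n+1$, giving $A = C_0 \oplus C_1 \oplus \cdots \oplus C_{n+1} \oplus I_{n+2}$, and complements $D_j$ likewise giving $B = D_0 \oplus \cdots \oplus D_{n+1} \oplus J_{n+2}$. Because $-\otimes_k-$ is exact (the same basis argument as in the proof of Lemma~\ref{HoaTam}), these splittings induce a direct-sum decomposition of $R = A \otimes_k B$ into the pairwise tensor products of the summands. In this decomposition, each product $I_aJ_b \subseteq R$ is precisely the sum of those ``interior'' pieces $C_i \otimes_k D_j$ with $i \ge a$ and $j \ge b$, together with all ``boundary'' pieces involving $I_{n+2}$ or $J_{n+2}$.

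Summing over $a + b = n$ then shows that $Q_n$ is the direct sum of the interior pieces $C_i \otimes_k D_j$ with $i + j \ge n$ together with every boundary piece, while $Q_{n+1}$ contains the same boundary pieces but only interior pieces with $i + j \ge n+1$. Hence $Q_n/Q_{n+1}$ is the direct sum of the pieces $C_i \otimes_k D_j$ with $i + j = n$, and the natural identifications $C_i \cong I_i/I_{i+1}$ and $D_j \cong J_j/J_{j+1}$, which are plainly compatible with $\phi$, close the argument. The main obstacle is the combinatorial bookkeeping showing that every boundary piece lies in both $Q_n$ and $Q_{n+1}$: this reduces to the observations that $J_{n+2} \subseteq J_{n+1} \subseteq Q_{n+1}$ and $J_{n+2} \subseteq J_n \subseteq Q_n$ (and symmetrically for $I_{n+2}$), so the boundary components contribute nothing to the quotient and one is left with exactly the stated direct sum.
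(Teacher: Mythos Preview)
Your argument is correct and takes a genuinely different route from the paper's. The paper never chooses complements; instead it works intrinsically at the level of ideals. It first shows that the sum $Q_n/Q_{n+1} = \sum_i (I_iJ_{n-i}+Q_{n+1})/Q_{n+1}$ is direct by proving the intersection
\[
(I_iJ_{n-i}+Q_{n+1}) \cap \Big(\sum_{j\ne i} I_jJ_{n-j}+Q_{n+1}\Big) \subseteq Q_{n+1},
\]
via the inclusion $\sum_{j\ne i} I_jJ_{n-j}+Q_{n+1} \subseteq I_{i+1}+J_{n-i+1}$ together with Lemma~\ref{HoaTam}. Along the way it obtains the useful identity $I_iJ_{n-i}\cap Q_{n+1} = I_{i+1}J_{n-i}+I_iJ_{n-i+1}$, which is reused later (in the proofs of Theorems~\ref{thm_depthreg_binomialproduct} and~\ref{thm_Bettisplitting_binomialproduct}); each summand is then identified via Lemma~\ref{tensor}. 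Your approach, by contrast, reduces the question to pure linear algebra over $k$: once the filtrations are split by $C_i$ and $D_j$, the identification of $Q_n$ and $Q_{n+1}$ as direct sums of the pieces $C_i\otimes_k D_j$ is combinatorics of index sets, and the quotient drops out immediately. This is arguably more transparent for seeing \emph{why} the decomposition holds, at the price of making non-canonical choices and not producing the intersection formula the paper later exploits.

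One small inaccuracy: your description of a single product $I_aJ_b$ as containing \emph{all} boundary pieces is not literally true (e.g.\ $C_0\otimes_k J_{n+2}\not\subseteq I_aJ_b$ when $a\ge 1$); only the boundary pieces with $i\ge a$ or $j\ge b$ appear. This does not affect the argument, since after summing over $a+b=n$ (and again over $a+b=n+1$) the terms with $a=0$ and with $b=0$ supply every boundary piece, exactly as your final paragraph checks.
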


\begin{proof} First, we will show that
$$\displaystyle Q_n/Q_{n+1} \cong \bigoplus_{i=0}^n \big((I_iJ_{n-i} + Q_{n+1})/Q_{n+1}\big) .$$
For that it suffices to show that for $0\le i\le n$,
$$
(I_iJ_{n-i}+Q_{n+1})\cap \big(\sum\limits_{j\neq i}I_jJ_{n-j}+Q_{n+1}\big) \subseteq Q_{n+1}
$$
or, equivalently,
$$
I_iJ_{n-i} \cap \big(\sum\limits_{j\neq i}I_jJ_{n-j}+Q_{n+1}\big) \subseteq Q_{n+1}.
$$

We have
$$\sum\limits_{j\neq i}I_jJ_{n-j} + Q_{n+1} = \sum\limits_{j\neq i}I_jJ_{n-j} + \sum\limits_{0 \le j\le n+1}I_jJ_{n-j+1}
 \subseteq J_{n-i+1} + I_{i+1}$$
because $J_{n-j} \subseteq J_{n-i+1}$ for $j < i$ and $I_j\subseteq I_{i+1}$ for $j \ge i+1$.
On the other hand, every element of $J_{n-i+1} + I_{i+1}$ in $I_iJ_{n-i}$ is a sum of two elements,
one in $J_{n-i+1} \cap (I_{i+1}+I_iJ_{n-i})$ and the other in $I_{i+1} \cap (J_{n-i+1}+I_iJ_{n-i})$. Therefore,
\begin{align*}
&I_iJ_{n-i} \cap \big(\sum\limits_{0\le j\le n, j\neq i}I_jJ_{n-j}+Q_{n+1}\big)\\
&\subseteq J_{n-i+1} \cap (I_{i+1}+I_iJ_{n-i}) + I_{i+1} \cap (J_{n-i+1}+I_iJ_{n-i})\\
& \subseteq J_{n-i+1}\cap I_i + I_{i+1}\cap J_{n-i}=I_{i+1}J_{n-i}+I_iJ_{n-i+1} \subseteq Q_{n+1},
\end{align*}
where the equality holds thanks to Lemma \ref{HoaTam}.

The above inclusions also show that $I_iJ_{n-i} \cap Q_{n+1} = I_{i+1}J_{n-i}+I_iJ_{n-i+1}.$ Hence,
\begin{align*}
(I_iJ_{n-i} + Q_{n+1})/Q_{n+1} & \cong  I_iJ_{n-i} /\big(Q_{n+1}\cap I_iJ_{n-i}\big)\\
& \cong I_iJ_{n-i}/\big(I_{i+1}J_{n-i}+I_iJ_{n-i+1}\big)\\
& \cong (I_i/I_{i+1})\otimes_k (J_{n-i}/J_{n-i+1}),
\end{align*}
where the last isomorphism follows from Lemma \ref{tensor}. Therefore,
$$
Q_n/Q_{n+1} = \bigoplus_{i=0}^n \big((I_iJ_{n-i} + Q_{n+1})/Q_{n+1}\big) \cong \bigoplus_{i=0}^n (I_i/I_{i+1})\otimes_k (J_{n-i}/J_{n-i+1}).
$$
\end{proof}

We are now ready to prove the main result of this section.

\begin{theorem} \label{binomial}
$(I+J)^{(n)} = \displaystyle \sum_{i+j = n} I^{(i)} J^{(j)}.$
\end{theorem}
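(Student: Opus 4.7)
The plan is to set $Q_n := \sum_{i+j=n} I^{(i)} J^{(j)}$ (with the convention $I^{(0)} = A$, $J^{(0)} = B$) and prove $Q_n = (I+J)^{(n)}$ by showing the two ideals share the same primary decomposition, with components supported on $\Min_R(I+J)$. I would organise the argument in three steps: first the containment $Q_n \subseteq (I+J)^{(n)}$ via localisation; then the crucial bound $\Ass_R(R/Q_n) \subseteq \Min_R(I+J)$ by induction on $n$; finally, the equality by a local comparison at each minimal prime of $I+J$.

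For step one I would fix $P \in \Min_R(I+J)$ and let $\pp := P \cap A$, $\qq := P \cap B$. Applying Lemma \ref{contraction} to $(A/I) \otimes_k (B/J) = R/(I+J)$ gives $\pp \in \Min_A(I)$ and $\qq \in \Min_B(J)$. Since $\pp$ is minimal over $I$, the ideal $IA_\pp$ is $\pp A_\pp$-primary, hence so is $I^n A_\pp$, forcing $I^{(n)} A_\pp = I^n A_\pp$. Flat base change along $A_\pp \to R_P$ propagates this to $I^{(i)} R_P = I^i R_P$ for every $i$, and symmetrically $J^{(j)} R_P = J^j R_P$. Therefore $Q_n R_P = (I+J)^n R_P$ for each such $P$, which implies $Q_n \subseteq (I+J)^n R_P \cap R$, and intersecting over $P \in \Min_R(I+J)$ yields $Q_n \subseteq (I+J)^{(n)}$.

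Step two is the core of the argument and proceeds by induction on $n$, the base case $n=0$ ($Q_0 = R$) being vacuous. Using the short exact sequence $0 \to Q_n/Q_{n+1} \to R/Q_{n+1} \to R/Q_n \to 0$, it suffices to bound $\Ass_R(Q_n/Q_{n+1})$. Proposition \ref{decomposition} supplies
$$Q_n/Q_{n+1} \cong \bigoplus_{i=0}^n \bigl(I^{(i)}/I^{(i+1)}\bigr) \otimes_k \bigl(J^{(n-i)}/J^{(n-i+1)}\bigr).$$
Now $I^{(i)}/I^{(i+1)}$ embeds in $A/I^{(i+1)}$, whose associated primes lie in $\Min_A(I)$ by the very definition of the symbolic power; analogously for $J$. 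Applying Theorem \ref{asso} to each summand therefore constrains its associated primes to lie in $\bigcup_{\pp \in \Min_A(I),\,\qq \in \Min_B(J)} \Min_R(R/(\pp+\qq))$, which, by a second application of Theorem \ref{asso} to $R/(I+J)$ itself, is precisely $\Min_R(I+J)$. This closes the induction.

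Finally, set $M := (I+J)^{(n)}/Q_n \subseteq R/Q_n$. By step two, $\Ass_R(M) \subseteq \Min_R(I+J)$; by step one, $Q_n R_P = (I+J)^n R_P$, while $(I+J)^{(n)} R_P = (I+J)^n R_P$ as well, since localising at the minimal prime $P$ collapses the $P'$-primary components of $(I+J)^n$ for $P' \neq P$ to the unit ideal. Hence $M_P = 0$ for every $P \in \Ass_R(M)$, which forces $\Ass_R(M) = \emptyset$ and $M = 0$, completing the proof. The main obstacle is step two: one must first derive $\Ass_A(I^{(i)}/I^{(i+1)}) \subseteq \Min_A(I)$ from the absence of embedded primes in $A/I^{(i+1)}$, and then interlock this observation with the tensor-product description of associated primes supplied by Theorem \ref{asso}; once this is in place, the remaining arguments are routine combinations of flat base change and primary decomposition.
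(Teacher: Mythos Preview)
Your proposal is correct and follows essentially the same approach as the paper: both proofs hinge on Proposition~\ref{decomposition} and Theorem~\ref{asso} to establish $\Ass_R(R/Q_n)\subseteq\Min_R(I+J)$, and on the localisation argument (via Lemma~\ref{contraction}) to obtain $(Q_n)_P=(I+J)^n_P$ at each minimal prime $P$. The only cosmetic differences are that you prove the containment $Q_n\subseteq(I+J)^{(n)}$ first and then conclude via the vanishing of the quotient module $M=(I+J)^{(n)}/Q_n$, whereas the paper argues the two containments in the opposite order using the language of primary components.
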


\begin{proof}
Consider the symbolic filtrations $\{I^{(i)}\}_{i \ge 0}$ and $\{J^{(j)}\}_{j \ge 0}$.
In this case, we have the $n$-th binomial sum
$$Q_n = \sum_{i+j=n}I^{(i)}J^{(j)}.$$

We shall first prove the inclusion $(I+J)^{(n)} \subseteq Q_n$.
For that we need to investigate the associated primes of $R/Q_n$.
It follows from the short exact sequence
$$0 \rightarrow  Q_{t-1}/Q_t \rightarrow R/Q_t \rightarrow R/Q_{t-1} \rightarrow 0,$$
that $\Ass_R(R/Q_t) \subseteq  \Ass_R(Q_{t-1}/Q_t) \cup \Ass_R(R/Q_{t-1}).$ Hence,
$$\Ass_R(R/Q_n) \subseteq \bigcup_{t=1}^n \Ass_R(Q_{t-1}/Q_t).$$
By Proposition \ref{decomposition}, we have
$$\Ass_R(Q_{t-1}/Q_t) = \bigcup_{i+j=t-1}\Ass_R\big(I^{(i)}/I^{(i+1)} \otimes_k J^{(j)}/J^{(j+1)}\big).$$
By Theorem \ref{asso}(ii), we have
$$\Ass_R\big(I^{(i)}/I^{(i+1)} \otimes_k J^{(j)}/J^{(j+1)}\big) = \bigcup_{\begin{subarray}{l} \pp\in \Ass_A(I^{(i)}/I^{(i+1)})\\
 \qq \in \Ass_B(J^{(j)}/J^{(j+1)})\end{subarray}}\Min_R(R/\pp+\qq).$$
Since $I^{(i)}/I^{(i+1)}$ and $J^{(j)}/J^{(j+1)}$ are ideals of $A/I^{(i+1)}$ and $B/J^{(j+1)}$, we have
\begin{align*}
\Ass_A(I^{(i)}/I^{(i+1)}) & \subseteq \Ass_A(A/I^{(i+1)}) = \Min_A(A/I^{(i+1)}) = \Min_A(A/I),\\
\Ass_B(J^{(j)}/J^{(j+1)} & \subseteq \Ass_B(B/J^{(j+1)}) = \Min_B(B/J^{(j+1)}) = \Min_B(B/J).
\end{align*}
Since $R/I+J = (A/I) \otimes_k (B/J)$, it follows from Theorem \ref{asso}(i) that
$$\Min_R(R/I+J) = \bigcup_{\begin{subarray}{l} \pp\in \Min_A(A/I)\\
 \qq \in \Min_B(B/J)\end{subarray}}\Min_R(R/\pp+\qq).$$
Therefore,
$$\Ass_R\big(I^{(i)}/I^{(i+1)} \otimes_k J^{(j)}/J^{(j+1)}\big)  \subseteq \Min_R(R/I+J).$$
So we get
\begin{align*}
\Ass_R(R/Q_n) & \subseteq \Min_R(R/I+J) =  \Min_R(R/(I+J)^n). \label{eq.Min}
\end{align*}
This shows that every associated prime of $Q_n$ is a minimal associated prime of $(I+J)^n$.
Since $Q_n \supseteq  \sum_{i+j = n} I^i J^j = (I+J)^n$,
it follows from the definition of symbolic powers that every primary component of $Q_n$ contains a primary component of $(I+J)^{(n)}$.
Therefore,  $Q_n \supseteq (I+J)^{(n)}$.

Now, we shall prove the converse inclusion $(I+J)^{(n)} \supseteq Q_n$.
Let $P$ be an arbitrary minimal associated prime of $(I+J)^n$.
Then $P$ is a minimal associated prime of $R/I+J = (A/I) \otimes_k(B/J)$.
Set $\pp = P \cap A$ and $\qq = P \cap B$. By Lemma \ref{contraction}(i),
$\pp$ and $\qq$ are minimal associated primes of $I$ and $J$.
Therefore, $(I^{(i)})_\pp = (I^i)_\pp$ and $(J^{(j)})_\qq = (J^j)_\qq$ for all $i, j \ge 0$.
This implies that
$$(I^{(i)})_\pp \otimes_k (J^{(j)})_\qq = (I^i)_\pp \otimes_k (J^j)_\qq.$$
Since $(I^{(i)} \otimes J^{(j)})_P$ and $(I^i \otimes J^j)_P$ are localizations of
$(I^{(i)})_\pp \otimes_k (J^{(j)})_\qq$ and $(I^i)_\pp \otimes_k (J^j)_\qq$ at a
prime ideal of $A_\pp \otimes B_\qq$, we get
$$(I^{(i)}J^{(j)})_P = (I^{(i)} \otimes_k J^{(j)})_P = (I^i \otimes J^j)_P = (I^iJ^j)_P$$
for all $i,j \ge 0$. Thus,
$$(Q_n)_P = \sum_{i+j = n} (I^{(i)}J^{(j)})_P = \sum_{i+j=n} (I^iJ^j)_P = (I+J)^n_P.$$
This shows that every primary component of $(I+J)^{(n)}$ contains a primary component of  $Q_n$.
Hence, $(I+J)^{(n)} \supseteq Q_n$.
\end{proof}

Theorem \ref{binomial} extends a result on squarefree monomial ideals of Bocci et al \cite[Theorem 7.8]{Many} to arbitrary ideals. It will play a key role in our investigation on invariants of $(I+J)^{(n)}$ in the next sections.

Moreover, Theorem \ref{binomial} yields the following criterion for the equality of symbolic and ordinary powers of $I+J$.

\begin{corollary} \label{equality}
Assume that  $I^t \neq I^{t+1}$ and $J^t \neq J^{t+1}$ for all $t \le n-1$.
Then
$(I+J)^{(n)} = (I+J)^n$ if and only if $I^{(t)} = I^t$ and $J^{(t)} = J^t$ for all $t \le n$.
\end{corollary}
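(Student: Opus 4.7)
The forward direction is immediate from Theorem \ref{binomial}: if $I^{(t)} = I^t$ and $J^{(t)} = J^t$ for every $t \le n$, then
$$(I+J)^{(n)} \;=\; \sum_{i+j=n} I^{(i)} J^{(j)} \;=\; \sum_{i+j=n} I^i J^j \;=\; (I+J)^n.$$
For the converse, assume $(I+J)^{(n)} = (I+J)^n$. The case $t=0$ is trivial and, since the hypothesis is symmetric in $(A,I)$ and $(B,J)$, it suffices to show $I^{(t)} \subseteq I^t$ for each $1 \le t \le n$. The strategy is to isolate the single summand $I^{(t)} J^{(n-t)}$ in the expansion of Theorem \ref{binomial} by cutting $R$ down modulo $J^{n-t+1}$.

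First I would split the ordinary expansion $(I+J)^n = \sum_{a+b=n} I^a J^b$ according to whether $a \ge t$ or $a \le t-1$ (so $b \ge n-t+1$); this gives the ideal inclusion
$$(I+J)^n \;\subseteq\; I^t + J^{n-t+1}$$
in $R$. Combining with Theorem \ref{binomial} and the assumption,
$$I^{(t)} J^{(n-t)} \;\subseteq\; (I+J)^{(n)} \;=\; (I+J)^n \;\subseteq\; I^t + J^{n-t+1}.$$
Since $n-t \le n-1$, the hypothesis on $J$ produces an element $g \in J^{n-t} \setminus J^{n-t+1}$. For any $f \in I^{(t)}$ we then have $fg = f \otimes g \in I^t + J^{n-t+1}$ in $R = A \otimes_k B$.

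The main step, and the only real obstacle, is to extract $f \in I^t$ from this containment. I would reduce modulo the extension $A \otimes_k J^{n-t+1}$ and work in $A \otimes_k (B/J^{n-t+1})$, a free $A$-module with basis any $k$-basis $\{e_\alpha\}$ of $B/J^{n-t+1}$; the image of the ideal $I^t + J^{n-t+1}$ is the $A$-submodule $\bigoplus_\alpha I^t e_\alpha$. Writing $\bar g = \sum_\alpha c_\alpha e_\alpha \neq 0$ with $c_\alpha \in k$, the image of $f \otimes g$ is $\sum_\alpha (c_\alpha f)\, e_\alpha$, and the inclusion $\sum_\alpha (c_\alpha f)\, e_\alpha \in \bigoplus_\alpha I^t e_\alpha$ forces $c_\alpha f \in I^t$ for every $\alpha$; picking an index with $c_\alpha \neq 0$ gives $f \in I^t$. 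This last extraction is the same flavour of linear-algebra bookkeeping as in Lemma \ref{HoaTam}, and it is precisely where the non-vanishing hypothesis on $J$ (and, symmetrically, on $I$) is consumed.
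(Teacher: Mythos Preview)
Your argument is correct, and it takes a genuinely different route from the paper's proof.

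For the nontrivial implication, the paper works with associated primes: from $(I+J)^{(n)}=(I+J)^n$ it deduces that every associated prime of $(I+J)^{n-1}/(I+J)^n$ lies in $\Min_R(R/(I+J))$, and then invokes Proposition~\ref{decomposition} to decompose this quotient as $\bigoplus_{i+j=n-1}(I^i/I^{i+1})\otimes_k(J^j/J^{j+1})$. Theorem~\ref{asso} (here the nonvanishing hypothesis guarantees each summand is nonzero) then forces $\Ass_A(I^i/I^{i+1})\subseteq\Min_A(A/I)$ for every $i\le n-1$, and a filtration argument yields $\Ass_A(A/I^t)\subseteq\Min_A(A/I)$, hence $I^{(t)}=I^t$.

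Your route bypasses all of this machinery. You combine the two elementary inclusions $I^{(t)}J^{(n-t)}\subseteq(I+J)^{(n)}$ (one summand in Theorem~\ref{binomial}) and $(I+J)^n\subseteq I^t+J^{n-t+1}$ (split by the exponent of $I$), pass to the free $A$-module $A\otimes_k(B/J^{n-t+1})$, and read off $f\in I^t$ coordinatewise; the hypothesis $J^{n-t}\neq J^{n-t+1}$ supplies the needed witness $g$ with $\bar g\neq 0$. This is more elementary: it avoids Proposition~\ref{decomposition} and Theorem~\ref{asso} entirely and needs only the flatness of $B/J^{n-t+1}$ over $k$. The paper's argument, on the other hand, yields the finer structural information $\Ass_A(I^i/I^{i+1})\subseteq\Min_A(A/I)$ along the way, which your direct approach does not see.
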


\begin{proof}
Assume that $I^{(t)} = I^t$ and $J^{(t)} = J^t$ for all $t \le n$.
By Theorem \ref{binomial}, we have
$$(I+J)^{(n)} = \sum_{i+j=n} I^{(i)}J^{(j)} = \sum_{i+j=n} I^iJ^j = (I+J)^n.$$

Conversely, assume that $(I+J)^{(n)} = (I+J)^n$.
Since $(I+J)^{n-1}/(I+J)^n \subseteq R/(I+J)^n$, we have
\begin{align*}
\Ass_R((I+J)^{n-1}/(I+J)^n) & \subseteq \Ass_R(R/(I+J)^n)= \Min_R(R/(I+J)^n)\\
& = \Min_R(R/(I+J))= \Min_R\big((A/I)\otimes_k (B/J)\big).
\end{align*}
By Proposition \ref{decomposition}, we have
$$(I+J)^{n-1}/(I+J)^n = \bigoplus_{i+j=n-1} (I^i/I^{i+1}) \otimes_k (J^j/J^{j+1}).$$
Hence,
$$
\Ass_R\big((I+J)^{n-1}/(I+J)^n\big) = \mathop{\bigcup}_{i+j=n-1} \Ass_R\big((I^i/I^{i+1}) \otimes_k (J^j/J^{j+1})\big).$$ Therefore,
$$\Ass_R\big((I^i/I^{i+1}) \otimes_k (J^j/J^{j+1})\big) \subseteq \Min_R\big((A/I)\otimes_k (B/J)\big).$$
Since $I^i \neq I^{i+1}$ and $J^j \neq J^{j+1}$, we can apply Theorem \ref{asso} to get
$\Ass_A(I^i/I^{i+1}) \subseteq \Min_A(A/I)$ for $i\le n-1$. Similarly as in the proof of Theorem \ref{binomial}, we have
$$\Ass_A(A/I^t) \subseteq \bigcup_{i=0}^{t-1} \Ass_A(I^i/I^{i+1})\subseteq \Min_A(A/I).$$
This implies that $\Ass_A(A/I^t) = \Min_A(A/I)$.
Hence, $I^{(t)} = I^t$ for all $t \le n$.
By the same way, we can also show that $J^{(t)} = J^t$ for all $t \le n$.
\end{proof}

It is easy to see that the assumption of Corollary \ref{equality}  is satisfied if $A$ and $B$ are local rings or domains.
The following example shows that Corollary \ref{equality} does not hold if we remove the assumption that $I^t \neq I^{t+1}$ and $J^t \neq J^{t+1}$ for all $t \le n-1$.

\begin{example}
Let $A = k[x]/(x-x^2)$ and $I = xA$. Then $I^2 = I$.
Let $B = k[y,z,t]$ and $J = (y^4,y^3z,yz^3,z^4,y^2z^2t)$. Then $J^{(1)} = (y,z)^4 \neq J$ and $J^{(2)} = J^2 = (y,z)^8$.
However, $(I+J)^{(2)} = (I+J)^2 = (x,(y,z)^8)R$.
\end{example}

\begin{remark}
Corollary \ref{equality} shows that if $(I+J)^{(n)} = (I+J)^n$ then $(I+J)^{(t)} = (I+J)^t$ for all $t \le n-1$. However, for an arbitrary ideal $Q$ in a polynomial ring, $Q^{(n)} = Q^n$ does not imply $Q^{(t)} = Q^t$ for all $t \le n-1$. The ideal $J$ in the above example is such a case.
\end{remark}

We end this section by giving another interesting application of Theorem \ref{binomial}. Recall that for a nonzero proper homogeneous ideal $I$, $\alpha(I) := \min \{d ~|~ I_d \not= 0\}$ is the smallest degree of a nonzero element in $I$, and the \emph{Waldschmidt constant} of $I$ is defined by
$$\hat{\alpha}(I) := \displaystyle \lim_{m \rightarrow \infty} \frac{\alpha(I^{(m)})}{m}.$$
This limit exists and was first investigated by Waldschmidt  in complex analysis \cite{Wa}.
Since then, it has appeared in different areas of mathematics, e.g., in number theory,
algebraic geometry and commutative algebra \cite{Ch, EV, HaHu}.
The following consequence of Theorem \ref{binomial} extends a result on the Waldschmidt constant of squarefree monomial ideals \cite[Corollary 7.10]{Many} to arbitrary homogeneous ideals.

\begin{corollary} \label{cor.Waldschmidt}
Let $A$ and $B$ be standard graded polynomial rings over $k$. Let $I \subset A$ and $J \subset B$ be nonzero proper homogeneous ideals. Then
$$\hat{\alpha}(I+J) = \min \{\hat{\alpha}(I), \hat{\alpha}(J)\}.$$
\end{corollary}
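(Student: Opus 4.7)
The plan is to combine the binomial expansion from Theorem \ref{binomial} with the bigraded structure of $R = A \otimes_k B$ (coming from the fact that $A$ and $B$ are polynomial rings on disjoint variable sets) and then pass to the limit, using the well-known fact that $\hat{\alpha}(I) = \inf_m \alpha(I^{(m)})/m$.

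\textbf{Step 1: reduce to the degrees of the summands.} By Theorem \ref{binomial}, $(I+J)^{(n)} = \sum_{i+j=n} I^{(i)} J^{(j)}$. For any ideals, the initial degree of a sum is the minimum of the initial degrees of the summands, so
$$\alpha\bigl((I+J)^{(n)}\bigr) = \min_{\substack{i+j=n \\ i,j \ge 0}} \alpha\bigl(I^{(i)} J^{(j)}\bigr),$$
with the convention $I^{(0)} = A$ and $J^{(0)} = B$ so $\alpha(I^{(0)}) = \alpha(J^{(0)}) = 0$.

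\textbf{Step 2: additivity of $\alpha$ on the product $I^{(i)} J^{(j)}$.} Since $A$ and $B$ are standard graded polynomial rings on disjoint variable sets, $R$ carries a bigrading, and for homogeneous $f \in A$, $g \in B$ one has $\deg(fg) = \deg f + \deg g$. Taking $f \in I^{(i)}$ and $g \in J^{(j)}$ of minimal degrees produces an element of $I^{(i)}J^{(j)}$ of degree $\alpha(I^{(i)}) + \alpha(J^{(j)})$, giving one inequality. Conversely, any nonzero homogeneous element of $I^{(i)}J^{(j)} \subseteq R$ is an $R$-linear combination of products $fg$ with $f \in I^{(i)}$, $g \in J^{(j)}$, and the bigrading forces each such product, and hence the whole combination, to have total degree at least $\alpha(I^{(i)}) + \alpha(J^{(j)})$. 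Therefore
$$\alpha\bigl((I+J)^{(n)}\bigr) = \min_{i+j=n}\bigl(\alpha(I^{(i)}) + \alpha(J^{(j)})\bigr).$$

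\textbf{Step 3: the two inequalities.} For the inequality $\hat{\alpha}(I+J) \le \min\{\hat{\alpha}(I),\hat{\alpha}(J)\}$, specialise the minimum to $(i,j)=(n,0)$ and $(0,n)$ to get $\alpha((I+J)^{(n)}) \le \min\{\alpha(I^{(n)}),\alpha(J^{(n)})\}$; divide by $n$ and let $n \to \infty$. For the reverse inequality, use the inclusion $I^{(i)} \cdot I^{(j)} \subseteq I^{(i+j)}$, which makes the sequence $\alpha(I^{(n)})$ subadditive, so by Fekete's lemma $\hat{\alpha}(I) = \inf_n \alpha(I^{(n)})/n$ and in particular $\alpha(I^{(i)}) \ge i\,\hat{\alpha}(I)$ for every $i \ge 0$; likewise for $J$. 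Then for every decomposition $n = i+j$,
$$\alpha(I^{(i)}) + \alpha(J^{(j)}) \;\ge\; i\,\hat{\alpha}(I) + j\,\hat{\alpha}(J) \;\ge\; n\,\min\{\hat{\alpha}(I),\hat{\alpha}(J)\}.$$
Dividing by $n$ and taking the limit yields $\hat{\alpha}(I+J) \ge \min\{\hat{\alpha}(I),\hat{\alpha}(J)\}$.

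The only nontrivial point is the additivity $\alpha(I^{(i)}J^{(j)}) = \alpha(I^{(i)}) + \alpha(J^{(j)})$ in Step 2; this is where the disjoint-variable structure of $R$ is genuinely used, and once it is in hand the rest is a one-line computation from the binomial expansion.
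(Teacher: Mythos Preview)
Your proof is correct and follows essentially the same approach as the paper: the paper simply defers to the argument of \cite[Corollary 7.10]{Many}, replacing their squarefree binomial expansion by Theorem \ref{binomial}, and what you have written is exactly that argument spelled out in detail (binomial expansion, additivity of $\alpha$ on products across disjoint variable sets via the bigrading, and subadditivity/Fekete for the lower bound).
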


\begin{proof} The proof goes in the same line of arguments as that of \cite[Corollary 7.10]{Many} replacing \cite[Theorem 7.8]{Many} by our more general statement in Theorem \ref{binomial}.
\end{proof}


\section{Depth and regularity of binomial sums}
\label{sect_depthreg}

Throughout this section, let $A = k[X]$ and $B = k[Y]$ be polynomial rings over an arbitrary field $k$ in different sets of variables.
Then $R := A \otimes_k B = k[X,Y]$. If $I \subset A$ and $J \subset B$ are homogeneous ideal, then their extensions in $R$ are also homogeneous. As before, we shall also denote these ideals by $I$ and $J$.

We shall need the following results of Hoa and Tam in \cite{HT}.

\begin{lemma} \label{HoaTam2} \cite[Lemmas 2.2 and 3.2]{HT}
Let $I \subseteq A$ and $J \subseteq B$ be nonzero proper homogeneous ideals. Then
\begin{enumerate}[\quad \rm(i)]
 \item $\depth R/IJ = \depth A/I + \depth B/J + 1$.
 \item $\reg R/IJ = \reg A/I + \reg B/J + 1$.
\end{enumerate}
\end{lemma}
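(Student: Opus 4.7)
The plan is to reduce both equalities to a K\"unneth-type splitting of depth and regularity across the $k$-tensor product $R = A\otimes_k B$, after first recognising $IJ$ as such a tensor product.

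Applying Lemma \ref{tensor} with $I'=0$ and $J'=0$ yields the $R$-module isomorphism $I\otimes_k J \cong IJ$. I would then establish the K\"unneth decompositions
$$H^n_{\mm}(M\otimes_k N) \cong \bigoplus_{i+j=n} H^i_{\mm_A}(M)\otimes_k H^j_{\mm_B}(N),$$
$$\Tor^R_n(M\otimes_k N, k) \cong \bigoplus_{i+j=n} \Tor^A_i(M,k)\otimes_k \Tor^B_j(N,k),$$
for arbitrary finitely generated graded modules $M$ over $A$ and $N$ over $B$, by taking minimal graded free resolutions $F_\bullet \to M$ over $A$ and $G_\bullet\to N$ over $B$ and checking that the total complex $F_\bullet\otimes_k G_\bullet$ is a minimal graded free $R$-resolution of $M\otimes_k N$: exactness is preserved because $k$-modules are flat (classical K\"unneth), and minimality holds because the differentials of $F_\bullet$ and $G_\bullet$ have entries in $\mm_A$ and $\mm_B$, both of which lie in $\mm$. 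Reading off the lowest non-vanishing $H^n_\mm$ and the Betti shifts in $\Tor$ from these splittings yields the tensor-product formulas
$$\depth_R(M\otimes_k N) = \depth_A M + \depth_B N,\qquad \reg_R(M\otimes_k N) = \reg_A M + \reg_B N.$$

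Applied with $M=I$ and $N=J$, these give $\depth_R(IJ) = \depth_A I + \depth_B J$ and $\reg_R(IJ) = \reg_A I + \reg_B J$. To pass from $IJ$ to $R/IJ$, I would use the short exact sequence $0 \to IJ \to R \to R/IJ \to 0$: since $R$ is Cohen-Macaulay with $\reg R = 0$ while $IJ$ is a nonzero proper ideal generated in degrees at least two, a routine depth-lemma and Mumford-style regularity argument forces $\depth(R/IJ) = \depth(IJ) - 1$ and $\reg(R/IJ) = \reg(IJ) - 1$. The analogous sequences $0\to I\to A\to A/I\to 0$ and $0\to J\to B\to B/J\to 0$ give $\depth_A I = \depth(A/I) + 1$, $\reg_A I = \reg(A/I) + 1$ and likewise for $J$. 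Substituting these four identities into the tensor-product formulas produces both stated equalities with the extra $+1$ absorbed correctly.

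The main obstacle is verifying the minimality of $F_\bullet\otimes_k G_\bullet$ as a graded free $R$-resolution of $M\otimes_k N$; only then can one read off graded Betti numbers (and hence $\reg$) from the K\"unneth splitting of $\Tor$, and only then does the local cohomology K\"unneth formula actually compute $\depth_R(M\otimes_k N)$ as claimed. This is precisely the point at which the polynomial structure of $A$ and $B$, the standard grading, and the fact that one tensors over the base field $k$ all enter essentially; over more general base rings or without a grading, the corresponding K\"unneth statements are inequalities or involve correction terms, and the clean formulas would fail.
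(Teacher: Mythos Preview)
Your argument is correct. Note, however, that the paper does not supply its own proof of this lemma: it simply cites \cite[Lemmas 2.2 and 3.2]{HT}. Your route---establishing the K\"unneth additivity $\depth_R(M\otimes_k N)=\depth_A M+\depth_B N$ and $\reg_R(M\otimes_k N)=\reg_A M+\reg_B N$ via the tensor-product resolution $F_\bullet\otimes_k G_\bullet$, then applying the standard shifts $\depth(R/Q)=\depth Q-1$ and $\reg(R/Q)=\reg Q-1$ for a nonzero proper homogeneous ideal $Q$ in a polynomial ring---is exactly the approach of Hoa--Tam, and the tensor-product formulas themselves are recorded as \cite[Lemma 2.5]{HTT}, which the present paper invokes elsewhere (e.g.\ in the proof of Theorem \ref{thm_depthreg_Qn/Qn+1}).

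One small point worth tightening: the K\"unneth splitting for local cohomology is not a consequence of the minimal free resolution $F_\bullet\otimes_k G_\bullet$; it comes instead from tensoring the \v{C}ech complexes on the variables of $A$ and of $B$ and observing that their tensor product is the \v{C}ech complex on the variables of $R$. Alternatively, you can avoid local cohomology altogether: the tensor resolution already gives $\pd_R(M\otimes_k N)=\pd_A M+\pd_B N$, and Auslander--Buchsbaum then yields the depth formula directly.
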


Let $\{I_i\}_{i \ge 0}$ and $\{J_j\}_{j \ge 0}$ be filtrations of homogeneous ideals in $A$ and $B$, respectively.
Recall that the ideal
$$Q_n := \sum_{i+j = n} I_i J_j$$
is called the $n$-th binomial sum of these filtrations.
The aim of this section is to give bounds for the depth and the regularity of $R/Q_n$ in terms of  those of $I_i$ and $J_j$.

\begin{theorem}
\label{thm_depthreg_binomialproduct}
For all $n\ge 1$, we have
\begin{enumerate}[\quad \rm(i)]
\item $\depth R/Q_n\ge $
$$\min_{\begin{subarray}{l} i \in [1,n-1]\\ j \in [1,n] \end{subarray}} \left\{\depth A/I_{n-i}+\depth B/J_i+1, \depth A/I_{n-j+1}+\depth B/J_j\right\},$$
\item $\reg R/Q_n \le $
$$\max_{\begin{subarray}{l} i \in [1,n-1]\\ j \in [1,n] \end{subarray}} \left\{\reg A/I_{n-i}+\reg B/J_i +1, \reg A/I_{n-j+1}+\reg B/J_j \right\}.$$
\end{enumerate}
\end{theorem}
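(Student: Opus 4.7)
\emph{Proof plan.} The idea is to filter $R/Q_n$ by the submodules $G_a := (I_aR + Q_n)/Q_n$ for $0 \le a \le n$. Since $I_0 = A$ and $I_n \subseteq Q_n$, this gives a descending chain $R/Q_n = G_0 \supseteq G_1 \supseteq \cdots \supseteq G_n = 0$; iterating the short exact sequences $0 \to G_a \to G_{a-1} \to G_{a-1}/G_a \to 0$ yields
\[ \depth R/Q_n \ge \min_{a \in [1,n]} \depth G_{a-1}/G_a, \qquad \reg R/Q_n \le \max_{a \in [1,n]} \reg G_{a-1}/G_a, \]
reducing the problem to estimating each successive quotient.

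The technical crux is the intersection formula $I_a \cap Q_n = \sum_{i \ge a} I_iJ_{n-i}$ for all $0 \le a \le n$. The containment $\supseteq$ is clear. For $\subseteq$, write $f \in I_a \cap Q_n$ as $f = \sum_{i=0}^n g_i$ with $g_i \in I_iJ_{n-i}$; the tail $\sum_{i \ge a} g_i$ lies in $I_a$, so the head $h := \sum_{i<a} g_i$ does too. Since $J_{n-i} \subseteq J_{n-a+1}$ for $i < a$, Lemma~\ref{HoaTam} gives $h \in I_a \cap J_{n-a+1} = I_aJ_{n-a+1} \subseteq I_aJ_{n-a} \subseteq \sum_{i \ge a} I_iJ_{n-i}$.

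Combining this with the modular law (valid since $I_a \subseteq I_{a-1}$) gives $G_{a-1}/G_a \cong I_{a-1}/(I_a + I_{a-1}J_{n-a+1})$, and applying Lemma~\ref{tensor} with $I = I_{a-1}$, $I' = I_a$, $J = B$, $J' = J_{n-a+1}$ yields
\[ G_{a-1}/G_a \cong (I_{a-1}/I_a) \otimes_k (B/J_{n-a+1}). \]
Tensoring the $A$-linear sequence $0 \to I_{a-1}/I_a \to A/I_a \to A/I_{a-1} \to 0$ over the field $k$ with $B/J_{n-a+1}$ produces the $R$-linear exact sequence
\[ 0 \to G_{a-1}/G_a \to (A/I_a) \otimes_k (B/J_{n-a+1}) \to (A/I_{a-1}) \otimes_k (B/J_{n-a+1}) \to 0, \]
whose middle and right terms are of the form $R/(I+J)$, with depth $\depth A/I + \depth B/J$ and regularity $\reg A/I + \reg B/J$ by the K\"unneth formula for local cohomology over the bigraded polynomial ring $R$ (essentially the same tool behind Lemma~\ref{HoaTam2}).

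Applying the usual short-exact-sequence depth/regularity estimates (with the convention $\depth A/I_0 = +\infty$ and $\reg A/I_0 = -\infty$ for the $a = 1$ case) gives
\[ \depth G_{a-1}/G_a \ge \min\{\depth A/I_a + \depth B/J_{n-a+1},\; \depth A/I_{a-1} + \depth B/J_{n-a+1} + 1\}, \]
and dually with $\max$, $\le$, and $\reg$. Taking the minimum (resp.\ maximum) over $a \in [1,n]$ and relabeling---$j = n-a+1$ with $a \in [1,n]$ produces the second-type terms $\depth A/I_{n-j+1} + \depth B/J_j$, while $i = n-a+1$ with $a \in [2,n]$ produces the first-type terms $\depth A/I_{n-i} + \depth B/J_i + 1$ with $i \in [1,n-1]$---recovers exactly the stated bounds. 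The genuinely delicate step is the intersection formula; everything downstream is a mechanical assembly of Lemmas~\ref{HoaTam}, \ref{tensor}, \ref{HoaTam2} and standard homological algebra.
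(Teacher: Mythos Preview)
Your argument is correct and takes a genuinely different route from the paper's. The paper builds $Q_n$ incrementally via the partial sums $P_{n,t}=I_nJ_0+\cdots+I_{n-t}J_t$, computes the intersection $P_{n,t-1}\cap I_{n-t}J_t=I_{n-t+1}J_t$, and feeds the resulting Mayer--Vietoris sequence
\[
0\longrightarrow R/I_{n-t+1}J_t\longrightarrow R/P_{n,t-1}\oplus R/I_{n-t}J_t\longrightarrow R/P_{n,t}\longrightarrow 0
\]
into Lemma~\ref{HoaTam2} (depth/reg of $R/IJ$). You instead filter $R/Q_n$ by the images of the $I_a$, identify each successive quotient in closed form as the tensor product $(I_{a-1}/I_a)\otimes_k(B/J_{n-a+1})$, and then bound that via the sequence $0\to G_{a-1}/G_a\to R/(I_a+J_{n-a+1})\to R/(I_{a-1}+J_{n-a+1})\to 0$ together with the additivity of depth/reg for $R/(I+J)$. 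Both hinge on Lemma~\ref{HoaTam} for the key intersection step; your filtration is arguably tidier and closer in spirit to Proposition~\ref{decomposition}, while the paper's decomposition has the advantage that it is exactly the one reused in Section~\ref{sect_splittings}, where $P_{n,t}=P_{n,t-1}+I_{n-t}J_t$ is shown to be a Betti splitting under Tor-vanishing hypotheses, turning the inequalities into equalities.
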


\begin{proof}
We shall only prove the bound for depth. The bound for regularity can be obtained in the same fashion.

For $t = 0, \dots, n$, set
$$P_{n,t}:= I_nJ_0 + I_{n-1}J_1 + \dots + I_{n-t}J_t.$$
Then $P_{n,t} = P_{n,t-1}+ I_{n-t}J_t$ for $1 \le t \le n$.
Since $P_{n,t-1} \subseteq I_{n-t+1}$, we have
$$P_{n,t-1}\cap I_{n-t}J_t \subseteq I_{n-t+1} \cap J_t = I_{n-t+1}J_t$$
by Lemma \ref{HoaTam}.
On the other hand, $I_{n-t+1}J_t \subseteq I_{n-t+1}J_{t-1} \subseteq P_{n,t-1}$ and $I_{n-t+1}J_t \subseteq I_{n-t}J_t$.
This implies that
$$P_{n,t-1} \cap I_{n-t}J_t = I_{n-t+1}J_t.$$
Hence, there is a short exact sequence
\[
0\longrightarrow R/I_{n-t+1}J_t \longrightarrow (R/P_{n,t-1})\oplus (R/I_{n-t}J_t) \longrightarrow R/P_{n,t} \longrightarrow 0.
\]
Therefore, we have
$$
\depth R/P_{n,t} \ge \min\left\{\depth R/P_{n,t-1}, \depth R/I_{n-t}J_t, \depth R/I_{n-t+1}J_t-1\right\}.
$$
We will use these bounds to successively estimate $\depth R/Q_n$ as follows.

For $t=n$, we have $P_{n,n} = Q_n$. Since $I_0J_n=J_n$,  we have
$\depth R/I_0J_n = \dim A+\depth B/J_n$.
Applying Lemma \ref{HoaTam2} to the product $I_1J_n$, we get
\begin{align*}
& \depth R/Q_n \ge\\
& \min\left\{\depth R/P_{n,n-1}, \dim A+\depth B/J_n, \depth A/I_1 + \depth B/J_n\right\}.
\end{align*}

For $t = n-1,\ldots,2$, by applying Lemma \ref{HoaTam2} to the products $I_{n-t}J_t$ and $I_{n-t+1}J_t$, we get
\begin{align*}
& \depth R/P_{n,t} \ge\\
& \min\left\{\depth R/P_{n,t-1}, \depth A/I_{n-t}\!  + \depth B/J_t\! +\! 1, \depth A/I_{n-t+1}\! +\! \depth B/J_t\right\}.
\end{align*}

For $t=1$, we have $\depth R/P_{n,0} =\depth A/I_n+\dim B$ because $P_{n,0}=I_nJ_0=I_n$.
Applying Lemma \ref{HoaTam2} to the product $I_nJ_1$ now yields
\begin{align*}
& \depth R/P_{n,1} \ge\\
& \min\left\{\depth A/I_n\! +\! \dim B, \depth A/I_{n-1}\! +\! \depth B/J_1\! +\! 1, \depth A/I_n\! +\! \depth B/J_1\right\}.
\end{align*}

Putting all these estimates for $\depth R/P_{n,t}$ together, we obtain
\begin{align*}
& \depth R/Q_n \ge \min \left\{\dim A+ \depth B/J_n, \depth A/I_n+\dim B, \right. \\
& \min_{\begin{subarray}{l} i \in [1,n-1]\\ j \in [1,n] \end{subarray}}   \{\depth A/I_{n-i}+\depth B/J_i +1, \depth A/I_{n-j+1}+\depth B/J_j\} \left. \right\} .
\end{align*}
Since $I_1 \neq (0)$, we have
$$\dim A +\depth B/J_n > \depth A/I_1+\depth B/J_n.$$
Since $J_1 \neq (0)$, we have
$$\depth A/I_n+\dim B > \depth A/I_n+\depth B/J_1.$$
The right-hand sides of the above inequalities are $\depth A/I_{n-j+1}+\depth B/J_j$ for $j = n,1$. Hence, we can remove the terms $\dim A +\depth B/J_n$ and $\depth A/I_n+\dim B$ from the estimate for $\depth R/Q_n$ to obtain that
\begin{align*}
& \depth R/Q_n\ge \\
& \min_{\begin{subarray}{l} i \in [1,n-1]\\ j \in [1,n] \end{subarray}} \left\{\depth A/I_{n-i}+\depth B/J_i +1, \depth A/I_{n-j+1}+\depth B/J_j \right\}.
\end{align*}
\end{proof}

\begin{remark}
Let $I \subset A$ and $J \subset B$ be nonzero homogeneous ideals.
If $I_i = I^i$ and $J_j = J^j$ for all $i, j \ge 0$, we have $Q_n = (I+J)^n$.
In this case, Theorem \ref{thm_depthreg_binomialproduct} recovers a main result of our previous work on depth and regularity of ordinary powers \cite[Theorem 2.4]{HTT}. As pointed out in \cite[Propositions 2.6 and 2.7]{HTT}, both terms on the right-hand side of these bounds are essential (i.e., are attainable). Hence, this is also the case for the two terms on the right-hand side of the bounds of Theorem \ref{thm_depthreg_binomialproduct}.
\end{remark}

If we consider the quotients $Q_n/Q_{n+1}$ instead of the quotient rings $R/Q_n$,
we can compute their depth and regularity explicitly in terms of those of quotients of successive $I_i$'s and $J_j$'s.

\begin{theorem}
\label{thm_depthreg_Qn/Qn+1}
For all $n\ge 1$, we have
\begin{enumerate}[\quad \rm(i)]
 \item $\displaystyle \depth Q_n/Q_{n+1}=\min_{i+j=n}\left\{\depth I_i/I_{i+1}+\depth J_j/J_{j+1} \right\}$,
 \item $\displaystyle \reg Q_n/Q_{n+1}=\max_{i+j=n}\left\{\reg I_i/I_{i+1}+\reg J_j/J_{j+1} \right\}$.
\end{enumerate}
\end{theorem}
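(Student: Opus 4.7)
The plan is to combine Proposition \ref{decomposition} with a Künneth-type formula for the depth and regularity of tensor products of graded modules over $A$ and $B$. Since the depth of a finite direct sum of graded modules is the minimum of the depths of the summands and the regularity is the maximum, the isomorphism
\[
Q_n/Q_{n+1} \;\cong\; \bigoplus_{i=0}^{n} (I_i/I_{i+1}) \otimes_k (J_{n-i}/J_{n-i+1})
\]
from Proposition \ref{decomposition} reduces both statements to the following Künneth-type identities, to be proved for finitely generated graded modules $M$ over $A$ and $N$ over $B$:
\[
\depth_R(M \otimes_k N) = \depth_A M + \depth_B N, \qquad \reg_R(M \otimes_k N) = \reg_A M + \reg_B N.
\]
Applied to $M = I$, $N = J$ these identities recover (up to shifts) Lemma \ref{HoaTam2}, of which they are a mild module-level generalization.

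To prove the Künneth-type formula, I would take minimal graded free resolutions $F_\bullet \to M$ over $A$ and $G_\bullet \to N$ over $B$ and form the total complex $F_\bullet \otimes_k G_\bullet$. Each term $F_p \otimes_k G_q$ is a finitely generated graded free $R$-module, and the differential $d^F \otimes 1 + (-1)^p \otimes d^G$ has entries in $\mm_A R + \mm_B R = \mm_R$, so the complex is minimal. Exactness in positive degrees follows from the classical Künneth formula applied to complexes of $k$-vector spaces: since $k$ is a field, the $\Tor$ correction term vanishes, so
\[
H_n(F_\bullet \otimes_k G_\bullet) = \bigoplus_{p+q=n} H_p(F_\bullet) \otimes_k H_q(G_\bullet),
\]
which equals $M \otimes_k N$ for $n = 0$ and vanishes for $n \geq 1$. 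Hence $F_\bullet \otimes_k G_\bullet$ is a minimal graded free $R$-resolution of $M \otimes_k N$, giving the Betti-number identity
\[
\beta^R_{p,d}(M \otimes_k N) = \sum_{\substack{p_1 + p_2 = p \\ d_1 + d_2 = d}} \beta^A_{p_1,d_1}(M)\,\beta^B_{p_2,d_2}(N).
\]

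The regularity formula is now immediate from the decomposition $d - p = (d_1 - p_1) + (d_2 - p_2)$, and the projective dimension identity $\pd_R(M \otimes_k N) = \pd_A M + \pd_B N$ combined with the Auslander--Buchsbaum formula in $A$, $B$, and $R$ (using $\dim R = \dim A + \dim B$) yields the depth formula. Applying these identities summand by summand in the decomposition of $Q_n/Q_{n+1}$ and taking the appropriate min/max over $i \in \{0, 1, \ldots, n\}$ produces exactly the formulas in the theorem. The main --- and essentially only --- subtlety is the bookkeeping for indices at which $I_i = I_{i+1}$ or $J_{n-i} = J_{n-i+1}$, where the corresponding tensor-product summand vanishes; this is handled cleanly by the standard conventions $\depth 0 = +\infty$ and $\reg 0 = -\infty$, under which such indices simply do not affect the minimum or maximum.
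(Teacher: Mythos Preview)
Your proposal is correct and follows essentially the same approach as the paper: both invoke Proposition~\ref{decomposition} for the direct-sum decomposition of $Q_n/Q_{n+1}$ and then apply a K\"unneth-type formula for depth and regularity of tensor products over $k$. The only difference is that the paper cites this formula as \cite[Lemma~2.5]{HTT}, whereas you supply a self-contained proof via tensoring minimal free resolutions and Auslander--Buchsbaum; your argument is a standard and correct derivation of that lemma.
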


\begin{proof} Proposition \ref{decomposition} gives
\[
Q_n/Q_{n+1}=\bigoplus_{i+j=n} (I_i/I_{i+1})\otimes_k (J_j/J_{j+1}).
\]
The desired conclusion now follows from \cite[Lemma 2.5]{HTT}, which expresses the depth and the regularity of a tensor product over $k$ in terms of those of the components.
\end{proof}

As a consequence of Theorem \ref{thm_depthreg_Qn/Qn+1}, we obtain bounds for the depth and the regularity of $R/Q_n$ in terms of those of quotients of successive $I_i$'s and $J_j$'s.

\begin{corollary}
For all  $n \ge 1$, we have
\begin{enumerate}[\quad \rm(i)]
\item $\displaystyle \depth R/Q_n \ge  \min_{i+j \le n-1}\{\depth I_i/I_{i+1} + \depth J_j/J_{j+1}\}$,
\item $\displaystyle \reg R/Q_n \le \max_{i+j\le n-1}\{\reg  I_i/I_{i+1} + \reg J_j/J_{j+1}\}.$
\end{enumerate}
\end{corollary}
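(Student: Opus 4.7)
Both bounds come from iterating the short exact sequences
\[
0 \longrightarrow Q_{t-1}/Q_t \longrightarrow R/Q_t \longrightarrow R/Q_{t-1} \longrightarrow 0
\]
for $t = 1, 2, \dots, n$, which is the natural device linking the depth and regularity of the quotient rings $R/Q_t$ to those of the filtration layers $Q_{t-1}/Q_t$ already computed in Theorem \ref{thm_depthreg_Qn/Qn+1}.

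For part (i), the standard depth lemma gives $\depth R/Q_t \ge \min\{\depth Q_{t-1}/Q_t,\ \depth R/Q_{t-1}\}$. Starting from $R/Q_0 = 0$ (with the convention $\depth 0 = +\infty$) and iterating $t$ from $1$ to $n$ by a routine induction, I obtain
\[
\depth R/Q_n \ge \min_{1 \le t \le n} \depth Q_{t-1}/Q_t.
\]
Substituting $\depth Q_{t-1}/Q_t = \min_{i+j = t-1}\{\depth I_i/I_{i+1} + \depth J_j/J_{j+1}\}$ from Theorem \ref{thm_depthreg_Qn/Qn+1} and observing that $\{(i,j) : i+j = t-1,\ 1 \le t \le n\} = \{(i,j) : i+j \le n-1\}$, I arrive at exactly the bound claimed in (i).

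Part (ii) is handled by the strictly parallel argument, replacing the depth lemma with its regularity counterpart $\reg R/Q_t \le \max\{\reg Q_{t-1}/Q_t,\ \reg R/Q_{t-1}\}$ and using the convention $\reg 0 = -\infty$; iteration yields $\reg R/Q_n \le \max_{1 \le t \le n} \reg Q_{t-1}/Q_t$, into which I substitute the explicit formula for $\reg Q_{t-1}/Q_t$ from Theorem \ref{thm_depthreg_Qn/Qn+1} and reindex as above. I do not anticipate any substantial obstacle: once Theorem \ref{thm_depthreg_Qn/Qn+1} is available, the corollary is a telescoping of textbook homological inequalities along the descending chain $Q_0 \supseteq Q_1 \supseteq \cdots \supseteq Q_n$, with the only minor care being the conventions at the zero module that make the initial term $R/Q_0$ drop out harmlessly from the outer min and max.
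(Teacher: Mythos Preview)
Your proof is correct and follows essentially the same approach as the paper: iterate the short exact sequences $0 \to Q_t/Q_{t+1} \to R/Q_{t+1} \to R/Q_t \to 0$ to bound $\depth R/Q_n$ (resp.\ $\reg R/Q_n$) by the minimum (resp.\ maximum) of $\depth Q_t/Q_{t+1}$ (resp.\ $\reg Q_t/Q_{t+1}$) over the relevant range, and then invoke Theorem~\ref{thm_depthreg_Qn/Qn+1}. The paper's version is just indexed with $t$ running from $0$ to $n-1$ rather than your $1$ to $n$, and it is terser about the base case $R/Q_0 = 0$, but the argument is the same.
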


\begin{proof}
Using the short exact sequences
$$0 \to Q_t/Q_{t+1} \to R/Q_{t+1} \to R/Q_t \to 0$$
for $t \le n-1$ we deduce that
\begin{align*}
\depth R/Q_n & \ge  \min_{t \le n-1} \depth Q_t/Q_{t+1},\\
\reg R/Q_n & \le \max_{t \le n-1} \reg Q_t/Q_{t+1}.
\end{align*}
Hence, the assertions follow from Theorem \ref{thm_depthreg_Qn/Qn+1}.
\end{proof}

If $\{I_i\}_{i \ge 0}$ and $\{J_j\}_{j \ge 0}$ are the filtrations of symbolic powers of two nonzero homogeneous ideals
$I \subset A$ and $J \subset B$, then $Q_n = (I+J)^{(n)}$ by Theorem \ref{binomial}.
For the sake of applications, we reformulate Theorem \ref{thm_depthreg_binomialproduct} and Theorem \ref{thm_depthreg_Qn/Qn+1} in this case.

\begin{theorem}\label{first bound}
For all $n \ge 1$, we have
\begin{enumerate}[\quad \rm(i)]
 \item $\depth R\big/(I+J)^{(n)} \ge$
$$\min_{\begin{subarray}{l} i \in [1,n-1]\\ j \in [1,n] \end{subarray}} \left\{\depth A/I^{(n-i)}  +  \depth B/J^{(i)} + 1, \depth A/I^{(n-j+1)}  +  \depth B/J^{(j)}\right\},$$
\item $\reg R\big/(I+J)^{(n)} \le$
$$\max_{\begin{subarray}{l} i \in [1,n-1]\\ j \in [1,n] \end{subarray}} \left\{\reg A/I^{(n-i)}  +  \reg B/J^{(i)} +  1, \reg A/I^{(n-j+1)}  +  \reg B/J^{(j)} \right\}.$$
\end{enumerate}
\end{theorem}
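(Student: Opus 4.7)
The plan is to derive Theorem \ref{first bound} as a direct specialization of the general bound in Theorem \ref{thm_depthreg_binomialproduct}. Concretely, I would take the two filtrations to be the symbolic power filtrations $\{I^{(i)}\}_{i \ge 0}$ in $A$ and $\{J^{(j)}\}_{j \ge 0}$ in $B$ (using the convention $I^{(0)} = A$ and $J^{(0)} = B$). First I must verify that these qualify as filtrations in the sense of Section \ref{sect_binomialexpansion}: condition $Q_0 = R$ is immediate from the convention; the chain condition $I^{(n)} \supseteq I^{(n+1)}$ follows at once from the definition, since $I^{n+1} A_P \subseteq I^n A_P$ for each minimal prime $P$ of $I$; and $I^{(1)}$ is a nonzero proper homogeneous ideal because $I \subseteq I^{(1)}$ and, if we had $I^{(1)} = A$, then $1 \in I A_P$ would force $I A_P = A_P$, contradicting that the minimal prime $P$ contains $I$. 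The analogous facts hold for $J^{(j)}$.

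With these filtrations fixed, the $n$-th binomial sum is
\[
Q_n = \sum_{i+j=n} I^{(i)} J^{(j)},
\]
which by Theorem \ref{binomial} equals $(I+J)^{(n)}$. Substituting $I_i = I^{(i)}$ and $J_j = J^{(j)}$ into parts (i) and (ii) of Theorem \ref{thm_depthreg_binomialproduct} immediately produces the depth lower bound and the regularity upper bound asserted in Theorem \ref{first bound}, because the expressions $\depth A/I_{n-i} + \depth B/J_i + 1$, $\depth A/I_{n-j+1} + \depth B/J_j$ and their regularity counterparts become exactly the terms in the statement.

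Since both ingredients are already in place — the binomial expansion of $(I+J)^{(n)}$ from Theorem \ref{binomial}, and the depth/regularity bounds for arbitrary binomial sums of filtrations from Theorem \ref{thm_depthreg_binomialproduct} — the proof is essentially a matter of combining the two results, and there is no substantive obstacle beyond the elementary filtration check described above. The only subtlety worth emphasizing in the write-up is that Theorem \ref{thm_depthreg_binomialproduct} applies to \emph{any} filtrations of homogeneous ideals, so no additional hypothesis on $I$ or $J$ (beyond being nonzero proper homogeneous) is needed for this reduction to go through.
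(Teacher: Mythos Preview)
Your proposal is correct and is essentially the same approach as the paper's: the paper simply observes that taking $I_i = I^{(i)}$ and $J_j = J^{(j)}$ gives $Q_n = (I+J)^{(n)}$ by Theorem~\ref{binomial}, and then states Theorem~\ref{first bound} as a reformulation of Theorem~\ref{thm_depthreg_binomialproduct} in this special case. Your added verification that the symbolic powers form a filtration in the required sense is a harmless elaboration of what the paper leaves implicit.
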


We shall see in the next section that the inequalities of Theorem \ref{first bound} are in fact equalities if $\chara(k) = 0$ or if $I$ and $J$ are monomial ideals.

\begin{theorem} \label{main equality}
For all  $n \ge 1$, we have
\begin{enumerate}[\quad \rm(i)]
\item $\displaystyle \depth (I+J)^{(n)}\big/(I+J)^{(n+1)} = \min_{i+j=n}\{\depth I^{(i)}/I^{(i+1)} + \depth J^{(j)}/J^{(j+1)}\}$,
\item $\displaystyle \reg (I+J)^{(n)}\big/(I+J)^{(n+1)} = \max_{i+j=n}\{\reg  I^{(i)}/I^{(i+1)} + \reg J^{(j)}/J^{(j+1)}\}.$
\end{enumerate}
\end{theorem}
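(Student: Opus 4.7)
The plan is to deduce Theorem \ref{main equality} as a direct specialization of Theorem \ref{thm_depthreg_Qn/Qn+1}. I would first verify that the two sequences of symbolic powers $\{I^{(i)}\}_{i \ge 0}$ and $\{J^{(j)}\}_{j \ge 0}$ are filtrations in the sense of Section \ref{sect_binomialexpansion}: by convention $I^{(0)} = A$ and $J^{(0)} = B$, while $I^{(1)} = I$ and $J^{(1)} = J$ are nonzero proper homogeneous ideals by hypothesis, and both sequences are clearly decreasing. Thus they are admissible filtrations of homogeneous ideals in $A$ and $B$, to which the results of Section \ref{sect_depthreg} apply.

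Next I would form the binomial sum
\[
Q_n = \sum_{i+j=n} I^{(i)} J^{(j)}
\]
associated to these filtrations. By Theorem \ref{binomial}, $Q_n = (I+J)^{(n)}$ for every $n \ge 0$, and in particular $Q_{n+1} = (I+J)^{(n+1)}$, so the natural identification
\[
Q_n/Q_{n+1} = (I+J)^{(n)}/(I+J)^{(n+1)}
\]
holds as graded $R$-modules. This reduces both identities in the theorem to the corresponding statements about $Q_n/Q_{n+1}$.

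Finally, applying Theorem \ref{thm_depthreg_Qn/Qn+1} with $I_i = I^{(i)}$ and $J_j = J^{(j)}$ yields
\[
\depth Q_n/Q_{n+1} = \min_{i+j=n}\bigl\{\depth I^{(i)}/I^{(i+1)} + \depth J^{(j)}/J^{(j+1)}\bigr\}
\]
and
\[
\reg Q_n/Q_{n+1} = \max_{i+j=n}\bigl\{\reg I^{(i)}/I^{(i+1)} + \reg J^{(j)}/J^{(j+1)}\bigr\},
\]
which are precisely (i) and (ii). Since the heavy lifting (the decomposition in Proposition \ref{decomposition}, the behavior of depth and regularity under $\otimes_k$, and the binomial expansion of symbolic powers) has already been carried out, there is essentially no obstacle here; the proof is a one-line specialization. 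If anything, the only point worth double-checking is that the passage to the extensions of $I^{(i)}$ and $J^{(j)}$ in $R = A \otimes_k B$ still yields the symbolic powers viewed in $R$ (so that Theorem \ref{binomial} applies verbatim), but this is immediate from flatness of the base change and was already used throughout the previous section.
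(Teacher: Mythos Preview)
Your proposal is correct and matches the paper's approach exactly: the paper presents Theorem \ref{main equality} simply as the reformulation of Theorem \ref{thm_depthreg_Qn/Qn+1} for the symbolic-power filtrations, using Theorem \ref{binomial} to identify $Q_n$ with $(I+J)^{(n)}$. There is nothing to add.
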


Using Theorem \ref{main equality} we obtain the following criterion for the Cohen-Macaulayness of $R/(I+J)^{(n)}$.
Recall that a finite graded $R$-module $M$ is Cohen-Macaulay if $\depth M = \dim M$.

\begin{corollary} \label{cor.CM}
The following conditions are equivalent:
\begin{enumerate}[\quad \rm(i)]
\item $(I+J)^{(n-1)}\big/(I+J)^{(n)}$ is Cohen-Macaulay,
\item $R/(I+J)^{(i)}$ is Cohen-Macaulay for all $i \le n$,
\item $A/I^{(i)}$ and $B/J^{(i)}$ are Cohen-Macaulay for all $i \le n$,
\item $I^{(i)}/I^{(i+1)}$ and $J^{(i)}/J^{(i+1)}$ are Cohen-Macaulay for all $i \le n-1$.
\end{enumerate}
\end{corollary}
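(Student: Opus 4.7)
The approach rests on translating Cohen-Macaulayness into the equality of depth and dimension, and on the fact that all modules in sight share a common dimension. Since $I+J$ and $(I+J)^{(t)}$ have the same radical for $t\ge 1$, we have $\dim R/(I+J)^{(t)} = \dim A/I + \dim B/J =: D$. Every associated prime of $R/(I+J)^{(t)}$ is minimal by the definition of symbolic power, so the subquotient $(I+J)^{(t-1)}/(I+J)^{(t)}$ is again $D$-dimensional when nonzero. The same remarks give $\dim A/I^{(i)} = \dim I^{(i)}/I^{(i+1)} = \dim A/I$, and similarly for $J$. Hence proving each Cohen-Macaulay assertion amounts to proving that depth attains its expected upper bound, with degenerate cases (vanishing subquotients) being automatically Cohen-Macaulay.

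For (i) $\Leftrightarrow$ (iv), I would apply Theorem \ref{main equality}(i) with $n-1$ in place of $n$:
\[
\depth (I+J)^{(n-1)}/(I+J)^{(n)} = \min_{i+j=n-1}\left\{\depth I^{(i)}/I^{(i+1)} + \depth J^{(j)}/J^{(j+1)}\right\}.
\]
Each summand is bounded above by $\dim A/I + \dim B/J = D$, so the minimum equals $D$ if and only if every summand does, i.e., $I^{(i)}/I^{(i+1)}$ is Cohen-Macaulay for all $i\le n-1$ and $J^{(j)}/J^{(j+1)}$ is Cohen-Macaulay for all $j\le n-1$; this is exactly (iv). For (iii) $\Leftrightarrow$ (iv), I would use the short exact sequences
\[
0 \to I^{(i)}/I^{(i+1)} \to A/I^{(i+1)} \to A/I^{(i)} \to 0
\]
and their $J$-analogues. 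The standard depth inequalities give $\depth I^{(i)}/I^{(i+1)} \ge \min\{\depth A/I^{(i+1)},\, \depth A/I^{(i)}+1\}$, yielding (iii) $\Rightarrow$ (iv); conversely, induction on $i$ propagates Cohen-Macaulayness from $I^{(i)}/I^{(i+1)}$ to $A/I^{(i+1)}$, with base case $A/I^{(1)}=A/I=I^{(0)}/I^{(1)}$.

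Finally I would close the loop (iv) $\Rightarrow$ (ii) $\Rightarrow$ (i) using the short exact sequences
\[
0 \to Q_t/Q_{t+1} \to R/Q_{t+1} \to R/Q_t \to 0, \qquad Q_t := (I+J)^{(t)}.
\]
The base case $R/(I+J) = (A/I)\otimes_k (B/J)$ is Cohen-Macaulay once $A/I$ and $B/J$ are, by the tensor-product formula $\depth(M\otimes_k N)=\depth M + \depth N$ from \cite[Lemma 2.5]{HTT}. For the inductive step from $t$ to $t+1$, (iv) combined with Theorem \ref{main equality}(i) yields $\depth Q_t/Q_{t+1} = D$, and the standard depth inequality gives $\depth R/Q_{t+1} \ge \min\{\depth Q_t/Q_{t+1}, \depth R/Q_t\} = D$, hence Cohen-Macaulayness. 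For (ii) $\Rightarrow$ (i), take $t = n-1$ in the same sequence: since $R/Q_{n-1}$ and $R/Q_n$ are Cohen-Macaulay of dimension $D$, the depth inequality gives $\depth Q_{n-1}/Q_n \ge \min\{\depth R/Q_n,\, \depth R/Q_{n-1}+1\} = D$. The principal difficulty is the equivalence (i) $\Leftrightarrow$ (iv), where Theorem \ref{main equality} does the heavy lifting; the remaining implications reduce to formal manipulations of the above short exact sequences.
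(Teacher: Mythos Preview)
Your proof is correct and follows essentially the same approach as the paper: both establish the common dimension $D=\dim A/I+\dim B/J$ for all the modules involved, use Theorem~\ref{main equality}(i) to obtain (i)~$\Leftrightarrow$~(iv), and then propagate Cohen--Macaulayness through the short exact sequences $0\to Q_t/Q_{t+1}\to R/Q_{t+1}\to R/Q_t\to 0$ and their $I$- and $J$-analogues. The only cosmetic difference is that the paper phrases the propagation step via vanishing of local cohomology while you use the depth lemma, and the paper verifies explicitly (via Nakayama) that the subquotients $(I+J)^{(t-1)}/(I+J)^{(t)}$ are nonzero and hence genuinely $D$-dimensional, whereas you handle this by remarking that a vanishing subquotient is trivially Cohen--Macaulay.
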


\begin{proof}
It is clear that
$$\dim (I+J)^{(n-1)}/(I+J)^{(n)} \le \dim R/(I+J)^{(n)} = \dim R/(I+J).$$
For any prime $P \in \Min_R(R/(I+J))$, we have
$$((I+J)^{(n-1)}/(I+J)^{(n)})_P = ((I+J)^{n-1}/(I+J)^{n})_P = 0$$
if and only if
$((I+J)^{n-1})_P = 0$ by Nakayama's lemma. But this could not happen because $R$ is a domain and $I+J \neq 0$.
Thus,
$$\dim R/(I+J) \le \dim (I+J)^{(n-1)}/(I+J)^{(n)}.$$
From this it follows that
$$\dim (I+J)^{(n-1)}/(I+J)^{(n)} = \dim R/(I+J) = \dim A/I + \dim B/J.$$
Similarly, we also have
$\dim I^{(i-1)}/I^{(i)} = \dim A/I$ and $\dim J^{(j-1)}/I^{(j)} = \dim B/J$ for all $i, j \ge 1$.\par

The above formulas imply
$$\dim (I+J)^{(n-1)}/(I+J)^{(n)} = \dim I^i/I^{(i+1)} + \dim J^{(i)}/J^{(i+1)}$$
for all $n,i,j \ge 1$. Using Theorem \ref{main equality}(i) we can show that
$$\depth (I+J)^{(n-1)}/(I+J)^{(n)} = \dim (I+J)^{(n-1)}/(I+J)^{(n)}$$
if and only if $\depth I^i/I^{(i+1)} = \dim I^i/I^{(i+1)}$ and $\depth J^{(i)}/J^{(i+1)} = \dim J^{(i)}/J^{(i+1)}$ for all $i \le n-1$. Thus, $(I+J)^{(n-1)}/(I+J)^{(n)}$ is Cohen-Macaulay if and only if $I^{(i)}/I^{(i+1)}$ and $J^{(i)}/J^{(i+1)}$ are Cohen-Macaulay for all $i \le n-1$. From this it follows that (i) $\Leftrightarrow$ (iv). In particular, (i) implies that $(I+J)^{(i)}/(I+J)^{(i+1)}$ is Cohen-Macaulay for all $i \le n-1$. \par

Note that a finite graded $R$-module $M$ is Cohen-Macaulay if and only if
$H_\mm^t(M) = 0$ for $t < \dim M$, where
$H_\mm^t(M)$ denotes the $t$-th local cohomology module of $M$ with respect to the maximal homogeneous ideal $\mm$ of $R$.
Using this characterization and the short exact sequence
$$0 \to (I+J)^{(i)}/(I+J)^{(i+1)} \to R/(I+J)^{(i+1)} \to R/(I+J)^{(i)} \to 0,$$
we deduce that  $R/(I+J)^{(i)}$ is Cohen-Macaulay for all $i \le n$ if and only if $(I+J)^{(i)}/(I+J)^{(i+1)}$ is Cohen-Macaulay for all $i \le n-1$. This proves the implication (i) $\Leftrightarrow$ (ii). \par

Similarly, $A/I^{(i)}$ and $B/J^{(i)}$ are Cohen-Macaulay for all $i \le n$ if and only if $I^{(i)}/I^{(i+1)}$ and $J^{(i)}/J^{(i+1)}$ are Cohen-Macaulay for all $i \le n-1$. This proves the equivalence (iii) $\Leftrightarrow$ (iv).
\end{proof}

The implication (i) $\Rightarrow$ (ii) is remarkable in the sense that the Cohen-Macaulay property of $(I+J)^{(n-1)}\big/(I+J)^{(n)}$ alone implies that of $R/(I+J)^{(t)}$ for all $t \le n$.
The following example shows  that this implication does not hold if we replace $I+J$ by an arbitrary homogeneous ideal.

\begin{example}
Take $A=k[x,y,z,t]$ and $I=(x^5,x^4y,xy^4,y^5,x^2y^2(xz+yt),x^3y^3)$.
Then $I$ is a $(x,y)$-primary ideal, $\dim A/I = 2$ and $I^{(1)}=I$.
It is clear that $z$ is a regular element of $A/I$.
Since the socle of $A/(I,z)$ contains the residue class of $x^2y^3$, we have $\depth A/I=1$.
Therefore, $A/I^{(1)} = A/I$ is not Cohen-Macaulay. On the other hand, we have
$I^2= (x,y)^{10} \cap \big(I^2+(z,t)\big).$
Hence, $I^{(2)}=(x,y)^{10}$.
Now, we can see that $z,t$ is a regular sequence of $I^{(1)}/I^{(2)}$.
Therefore, $I^{(1)}/I^{(2)}$ is Cohen-Macaulay.
\end{example}

We end this section with the following formulas for the case where one of the ideals $I$ and $J$ is generated by linear forms.
Though this case appears to be simple, these formulas had not been known before.
The proof is based on the binomial expansion of $(I+J)^{(n)}$.

\begin{proposition} \label{linear}
Assume that $J$ is generated by linear forms. Then
\begin{enumerate}[\quad \rm(i)]
\item $\displaystyle \depth R/(I+J)^{(n)} = \min_{i \le n}\{\depth A/I^{(i)} + \dim B/J\}$,
\item $\displaystyle \reg R/(I+J)^{(n)} = \max_{i \le n}\{\reg  A/I^{(i)} - i\} + n.$
\end{enumerate}
\end{proposition}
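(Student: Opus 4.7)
The plan is to reduce to the case $J=\mm_B$ (the maximal homogeneous ideal of $B$) and then exploit the nilpotence of $\mm_B$ on the quotient to reduce the local cohomology computation to the $A$-side, where a direct-sum decomposition makes the answer transparent.

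First I would observe that after a linear change of coordinates on $Y$, we may assume $J=(y_1,\dots,y_r)\subset B=k[y_1,\dots,y_d]$. Let $B'=k[y_1,\dots,y_r]$, $R'=A\otimes_k B'$, and $J'=\mm_{B'}$. Applying Theorem~\ref{binomial} in both $R'$ and $R$ shows that $(I+J)^{(n)}$ in $R$ is the extension of $(I+J')^{(n)}\subset R'$, so
\[
R/(I+J)^{(n)} \;=\; \bigl(R'/(I+J')^{(n)}\bigr)[y_{r+1},\dots,y_d],
\]
and the variables $y_{r+1},\dots,y_d$ form a regular sequence on it. This shifts depth by $d-r=\dim B/J$ and leaves regularity unchanged, reducing us to the case $J=\mm_B$.

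Next, assuming $J=\mm_B$, set $Q_n=(I+\mm_B)^{(n)}=\sum_{s=0}^n I^{(s)}\mm_B^{n-s}$ (by Theorem~\ref{binomial}). Reading off bi-graded pieces by the $B$-degree $t$, one finds $(Q_n)_{*,t}=I^{(n-t)}\otimes_k B_t$ for $0\le t\le n-1$ and $(Q_n)_{*,t}=A\otimes_k B_t$ for $t\ge n$, giving
\[
R/Q_n \;=\; \bigoplus_{t=0}^{n-1} (A/I^{(n-t)})\otimes_k B_t
\]
as graded $A$-modules (with $B_t$ placed in $B$-degree $t$). The decisive point is that $\mm_B^n\subseteq Q_n$, so $\mm_B$ is nilpotent on $M:=R/Q_n$, and hence $\sqrt{\mm_A R+\Ann M}=\mm=\sqrt{\mm+\Ann M}$. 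By the standard fact that local cohomology depends only on the intersection of $V(I)$ with $\supp M$, this gives $H^i_\mm(M)\cong H^i_{\mm_A R}(M)$ as graded $R$-modules for every $i$. Computing the right-hand side via the \v{C}ech complex on the generators of $\mm_A$ and using that local cohomology commutes with direct sums and with tensoring by finite-dimensional $k$-vector spaces,
\[
H^i_\mm(R/Q_n) \;=\; \bigoplus_{t=0}^{n-1} H^i_{\mm_A}(A/I^{(n-t)})\otimes_k B_t,
\]
where the $t$-th summand contributes internal $B$-degree $t$ to the total degree.

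From this description, depth and regularity can be read off. The depth is the smallest $i$ for which the above sum is nonzero, giving $\min_t \depth A/I^{(n-t)}=\min_{s\le n}\depth A/I^{(s)}$; the regularity is $\max\{i+j : [H^i_\mm(R/Q_n)]_j\ne 0\}=\max_t(t+\reg A/I^{(n-t)})=n+\max_{s\le n}\{\reg A/I^{(s)}-s\}$. Combining with the reduction step yields both formulas in the statement. The main obstacle will be carefully establishing the bigraded decomposition of $Q_n$ so that the Künneth-style formula for $H^i_\mm(R/Q_n)$ is valid; the rest is routine application of support-insensitivity of local cohomology.
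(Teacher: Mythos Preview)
Your argument is correct, and the overall strategy---reduce to $J=\mm_B$ via the extra free variables, then use the decomposition of $R/(I+\mm_B)^{(n)}$ by $B$-degree coming from Theorem~\ref{binomial}---is the same one the paper uses.

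The difference is in how the case $J=\mm_B$ is finished. The paper argues by induction on $s=\dim B$: for $s=1$ it writes $R/(I,y)^{(n)}=\bigoplus_{i\le n}(A/I^{(i)})y^{n-i}$ and reads off depth and regularity directly, then for $s>1$ it sets $A'=A[y_1,\dots,y_{s-1}]$, $I'=(I,y_1,\dots,y_{s-1})$ and applies the inductive hypothesis to $(I',y_s)$. You instead treat all $y$-variables at once, writing out the full bigraded decomposition $R/Q_n=\bigoplus_{t=0}^{n-1}(A/I^{(n-t)})\otimes_k B_t$ and invoking the change-of-support principle $H^i_\mm(M)\cong H^i_{\mm_A R}(M)$ (valid since $\mm_B^n$ annihilates $M$) to compute local cohomology summand by summand. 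Your route avoids the induction and makes explicit the nilpotence argument that the paper's one-variable step leaves implicit; the paper's route is a shade more elementary in that it never names local cohomology. Both yield the same formulas with the same amount of real work.
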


\begin{proof}
Assume that $B = k[y_1,\ldots,y_s]$.
Without restriction we may assume that $J = (y_1,\ldots,y_t)$, $t \le s$. Then $\dim B/J = s-t$.
Set $B' = k[y_1,\ldots,y_t]$, $J' = (y_1,\ldots,y_t)B'$, and $R' = A \otimes_k B'$.
It is clear that
\begin{align*}
\depth R/(I+J)^{(n)} & =  \depth R'/(I+J')^{(n)}+ s-t,\\
 \reg R/(I+J)^{(n)} & = \reg R'/(I+J')^{(n)}.
\end{align*}
Therefore, we only need to prove the case $t=s$.  \par
If $t=s=1$, we set $y=y_1$. Then $B = k[y]$ and $J = (y)$.
By Theorem \ref{binomial}, we have $(I,y)^{(n)} = \sum_{i=0}^n I^{(i)}y^{n-i}$.
If we write $R = \oplus_{i \ge 0} Ay^i$, then $R/(I,y)^{(n)} = \oplus_{i \le n} (A/I^{(i)})y^{n-i}$.
From this it follows that
\begin{align*}
\depth R/(I,y)^{(n)} & =  \min_{i\le n}\depth A/I^{(i)} ,\\
 \reg R/(I,y)^{(n)} & = \max_{i \le n}\{\reg A/I^{(i)}-i\} +n.
\end{align*}\par
If $t = s > 1$, we set $A' = A[y_1,\ldots,y_{s-1}]$ and $I' = (I,y_1,\ldots,y_{s-1})A'$.
Using induction we may assume that
\begin{align*}
\depth A'/(I')^{(n)} & =  \min_{i\le n}\depth A/I^{(i)},\\
 \reg A'/(I')^{(n)} & = \max_{i \leq n}\{\reg A/I^{(i)}-i\}+n.
\end{align*}
Since $I+J = (I',y_s)$, we have
\begin{align*}
\depth R/(I+J)^{(n)} & =  \min_{i\le n}\depth A'/(I')^{(i)} = \min_{i\le n} \depth A/I^{(i)},\\
 \reg R/(I+J)^{(n)} & = \max_{i \leq n}\{\reg A'/(I')^{(i)} -i\}+n = \max_{i \le n}\{ \reg A/I^{(i)}-i\}+n.
\end{align*}
\end{proof}


\section{Splitting conditions}
\label{sect_splittings}

The aim of this section is to show that the inequalities of Theorem \ref{first bound} are equalities
if $\chara(k)=0$ or if $I$ and $J$ are monomial ideals.
Our main tool is the following notion which allows us to compute the depth and the regularity of a sum of ideals in terms of those of the summands and their intersection. \par

Let $R$ be a polynomial ring over a field $k$.
Let $P, I, J$ be nonzero homogeneous ideals of $R$ such that $P=I+J$.
The sum $P = I+J$ is called a {\it Betti splitting} if the Betti numbers of $P,I,J, I \cap J$ satisfy the relation
\[
\beta_{i,j}(P)=\beta_{i,j}(I)+\beta_{i,j}(J)+\beta_{i-1,j}(I\cap J)
\]
for all $i\ge 0$ and $j \in \ZZ$.

This notion was introduced by Francisco, H\`a and Van Tuyl \cite{FHV}.
It generalizes the notion of splittable monomial ideals of Eliahou and Kervaire \cite{EK}.
The following result explains why it is useful to have a Betti splitting.

\begin{lemma}
\label{lem_depthreg_Bettisplit} \cite[Corollary 2.2]{FHV}
Let $P = I+J$ be a Betti splitting. Then
\begin{enumerate}[\quad \rm(i)]
\item $\depth R/P =\min\{\depth R/I, \depth R/J, \depth R/I\cap J -1\},$
\item $\reg R/P = \max\{\reg R/I, \reg R/J, \reg R/I\cap J -1\}.$
\end{enumerate}
\end{lemma}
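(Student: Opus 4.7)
The plan is to deduce both equalities directly from the definition of Betti splitting by translating it into statements about projective dimension and (Castelnuovo--Mumford) regularity, and then applying the standard dictionary between these invariants of an ideal and those of the corresponding quotient ring.

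First I would record the standard relations I intend to use. For a nonzero proper homogeneous ideal $M$ in the polynomial ring $R$, the short exact sequence $0 \to M \to R \to R/M \to 0$ gives $\pd R/M = \pd M + 1$ and $\reg R/M = \reg M - 1$. Combined with the Auslander--Buchsbaum formula, one has $\depth R/M = \depth R - \pd M - 1$. Hence it suffices to prove the corresponding identities
\[
\pd P = \max\{\pd I,\,\pd J,\,\pd(I\cap J)+1\}, \qquad \reg P = \max\{\reg I,\,\reg J,\,\reg(I\cap J)-1\},
\]
and then subtract (respectively, adjust by $-1$) on both sides.

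Second, I would exploit the Betti splitting equality $\beta_{i,j}(P)=\beta_{i,j}(I)+\beta_{i,j}(J)+\beta_{i-1,j}(I\cap J)$. Since all three summands are non-negative integers, $\beta_{i,j}(P)\neq 0$ if and only if at least one of $\beta_{i,j}(I)$, $\beta_{i,j}(J)$, $\beta_{i-1,j}(I\cap J)$ is nonzero. Taking the maximum of $i$ over pairs $(i,j)$ with $\beta_{i,j}(P)\neq 0$ yields $\pd P=\max\{\pd I,\pd J,\pd(I\cap J)+1\}$, the shift by $1$ arising from the index reindexing $i\mapsto i-1$ in the third term. Similarly, taking the maximum of $j-i$ over pairs with $\beta_{i,j}(P)\neq 0$, and substituting $i' = i-1$ in the third summand so that $j-i = (j-i')-1$, gives $\reg P=\max\{\reg I,\reg J,\reg(I\cap J)-1\}$.

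Finally, I would conclude. Adding $1$ to $\pd P$ and applying Auslander--Buchsbaum yields
\[
\depth R/P = \depth R - \pd P - 1 = \min\{\depth R/I,\,\depth R/J,\,\depth R/(I\cap J)-1\},
\]
which is (i); subtracting $1$ from $\reg P$ yields (ii). The argument is purely formal once the Betti splitting identity is granted, so I do not anticipate a genuine obstacle. The only point that requires care is keeping track of the index shift in the last summand of the Betti splitting identity, which accounts for the ``$-1$'' in the $\depth R/(I\cap J)-1$ and $\reg R/(I\cap J)-1$ terms on the right-hand sides of (i) and (ii); a sign or indexing error here is the most likely source of a slip.
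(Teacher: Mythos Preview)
Your argument is correct. The paper does not supply its own proof of this lemma; it simply cites \cite[Corollary 2.2]{FHV}, and your derivation from the Betti splitting identity via projective dimension, Auslander--Buchsbaum, and the $\pd$/$\reg$ shift between an ideal and its quotient is exactly the standard proof one finds in that reference.
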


A Betti splitting can be characterized by the following property.
We say that a homomorphism $\phi: M \to N$ of graded $R$-modules is {\it Tor-vanishing} if  $\Tor^R_i(k,\phi)=0$ for all $i\ge 0$.
This notion was due to Nguyen and Vu \cite{NgV}.

\begin{lemma}  \protect{\cite[Proposition 2.1]{FHV}}
\label{lem_Bettisplit}
The following conditions are equivalent:
\begin{enumerate}[\quad \rm(i)]
\item The decomposition $P=I+J$ is a Betti splitting.
\item The inclusion maps $I\cap J\to I$ and $I\cap J \to J$ are Tor-vanishing.
\end{enumerate}
\end{lemma}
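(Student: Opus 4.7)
The plan is to read both conditions through the long exact sequence of $\Tor_\bullet^R(k,-)$ applied to the Mayer--Vietoris short exact sequence
$$0 \to I\cap J \xrightarrow{\psi} I\oplus J \xrightarrow{\varphi} P \to 0,$$
with $\psi(x)=(x,-x)$ and $\varphi(a,b)=a+b$. This yields the long exact sequence
$$\cdots \to \Tor_i(k,I\cap J)\xrightarrow{\alpha_i} \Tor_i(k,I)\oplus\Tor_i(k,J)\xrightarrow{\gamma_i} \Tor_i(k,P)\xrightarrow{\delta_i} \Tor_{i-1}(k,I\cap J)\to \cdots,$$
in which $\alpha_i=\bigl(\Tor_i(k,\iota_1),\,-\Tor_i(k,\iota_2)\bigr)$, where $\iota_1\colon I\cap J\hookrightarrow I$ and $\iota_2\colon I\cap J\hookrightarrow J$ are the natural inclusions. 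Consequently condition (ii) is exactly the assertion that $\alpha_i = 0$ for every $i\ge 0$.

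Next I would carry out a graded dimension chase in this long exact sequence. Splitting it into three-term exact pieces and summing alternating dimensions in each graded component, one obtains, for all $i\ge 0$ and $j\in \ZZ$,
$$\beta_{i,j}(P) = \beta_{i,j}(I) + \beta_{i,j}(J) + \beta_{i-1,j}(I\cap J) - \dim_k(\operatorname{im}\alpha_i)_j - \dim_k(\operatorname{im}\alpha_{i-1})_j,$$
with the convention $\Tor_{-1}^R(k,-)=0$. Hence the Betti splitting identity in (i) is equivalent to the vanishing of both correction terms for every $(i,j)$; letting $i$ range over all nonnegative integers (with $i=0$ killing the $\alpha_{-1}$ contribution automatically), this amounts precisely to $\operatorname{im}\alpha_i = 0$ for every $i\ge 0$, i.e., $\alpha_i = 0$ for every $i$.

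The only bookkeeping subtlety, which I do not expect to be serious, is to confirm that the vanishing of $\alpha_i$ is equivalent to the simultaneous vanishing of its two components $\Tor_i(k,\iota_1)$ and $\Tor_i(k,\iota_2)$. This is immediate from the direct-sum structure of the target $\Tor_i(k,I)\oplus \Tor_i(k,J)$ together with the $\pm$-sign convention in $\psi$. Once these observations are in place both implications (i)$\Rightarrow$(ii) and (ii)$\Rightarrow$(i) fall out of the same dimension identity.
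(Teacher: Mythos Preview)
Your argument is correct: the Mayer--Vietoris long exact sequence together with the graded dimension count yields exactly the identity you wrote, and the equivalence with $\alpha_i=0$ for all $i$ (hence with the Tor-vanishing of both inclusions) follows. The paper does not supply its own proof of this lemma but simply cites \cite[Proposition~2.1]{FHV}, and the argument there proceeds along the same lines as yours.
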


We say that a filtration of ideals $\{P_n\}_{n \ge 0}$ in $R$
is a {\it Tor-vanishing filtration} if for all $n\ge 1$, the inclusion map $P_n\to P_{n-1}$ is Tor-vanishing, i.e. $\Tor^R_i(k,P_n) \to \Tor^R_i(k,P_{n-1})$ is the zero map for all $i \ge 0$.

The following result shows that the inequalities of Theorem \ref{thm_depthreg_binomialproduct} are equalities if the given filtrations are Tor-vanishing.

\begin{theorem}
\label{thm_Bettisplitting_binomialproduct}
Let $\{I_i\}_{i \ge 0}$ and $\{J_j\}_{j \ge 0}$ be Tor-vanishing filtrations in $A$ and $B$.
Let $Q_n = \sum_{i+j=n}I_iJ_j$. Then
\begin{enumerate}[\quad \rm(i)]
\item $\depth R/Q_n =$
$$\min_{\begin{subarray}{l} i \in [1,n-1]\\ j \in [1,n] \end{subarray}} \left\{\depth A/I_{n-i} +\depth B/J_i+1, \depth A/I_{n-j+1} + \depth B/J_j\right\},$$
\item $\reg R/Q_n =$
$$\max_{\begin{subarray}{l} i \in [1,n-1]\\ j \in [1,n] \end{subarray}} \left\{\reg A/I_{n-i}+\reg B/J_i+1, \reg A/I_{n-j+1}+\reg B/J_j\right\}.$$
\end{enumerate}
\end{theorem}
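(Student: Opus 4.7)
The plan is to upgrade the proof of Theorem \ref{thm_depthreg_binomialproduct} by replacing each one of its short exact sequences with a \emph{Betti splitting}, so that Lemma \ref{lem_depthreg_Bettisplit} converts the previous inequalities into equalities. As in that proof, set $P_{n,t} := \sum_{s=0}^t I_{n-s}J_s$, so that $P_{n,t} = P_{n,t-1} + I_{n-t}J_t$ and $P_{n,t-1} \cap I_{n-t}J_t = I_{n-t+1}J_t$. The key claim is that this decomposition is a Betti splitting whenever $\{I_i\}$ and $\{J_j\}$ are Tor-vanishing.

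By Lemma \ref{lem_Bettisplit}, verifying the Betti splitting amounts to showing that the two inclusion maps
\[
\iota_1 : I_{n-t+1}J_t \hookrightarrow I_{n-t}J_t \quad \text{and} \quad \iota_2 : I_{n-t+1}J_t \hookrightarrow P_{n,t-1}
\]
are Tor-vanishing. I exploit the fact that $A$ and $B$ are polynomial rings in disjoint variables, so for any $A$-module $M$ and $B$-module $N$ the product $MN \subseteq R$ is canonically isomorphic to $M \otimes_k N$. Taking $F_\bullet \to k$ and $G_\bullet \to k$ to be the minimal free resolutions over $A$ and $B$, respectively, the tensor product $F_\bullet \otimes_k G_\bullet$ is a free $R$-resolution of $k$, so the K\"unneth formula over a field yields
\[
\Tor^R_i(k, M \otimes_k N) \cong \bigoplus_{p+q=i} \Tor^A_p(k, M) \otimes_k \Tor^B_q(k, N),
\]
and the same decomposition holds functorially for induced maps.

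For $\iota_1$, which is induced by tensoring the inclusion $I_{n-t+1} \hookrightarrow I_{n-t}$ with the identity on $J_t$, every K\"unneth summand of $\Tor^R_i(k, \iota_1)$ carries the factor $\Tor^A_p(k, I_{n-t+1} \to I_{n-t})$, which vanishes by hypothesis. For $\iota_2$, the key point is that $J_t \subseteq J_{t-1}$ forces $I_{n-t+1}J_t \subseteq I_{n-t+1}J_{t-1}$, and $I_{n-t+1}J_{t-1}$ appears as one of the summands of $P_{n,t-1}$. Thus $\iota_2$ factors as
\[
I_{n-t+1}J_t \ \xrightarrow{\ \mathrm{id}\otimes\mathrm{incl}\ }\ I_{n-t+1}J_{t-1}\ \hookrightarrow\ P_{n,t-1},
\]
and the first arrow is Tor-vanishing by the K\"unneth argument applied to the inclusion $J_t \hookrightarrow J_{t-1}$. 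Hence $\iota_2$ is itself Tor-vanishing.

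With the Betti splitting in hand, Lemma \ref{lem_depthreg_Bettisplit} supplies the recursions
\begin{align*}
\depth R/P_{n,t} &= \min\bigl\{\depth R/P_{n,t-1},\ \depth R/I_{n-t}J_t,\ \depth R/I_{n-t+1}J_t - 1\bigr\},\\
\reg R/P_{n,t} &= \max\bigl\{\reg R/P_{n,t-1},\ \reg R/I_{n-t}J_t,\ \reg R/I_{n-t+1}J_t - 1\bigr\},
\end{align*}
and Lemma \ref{HoaTam2} evaluates the depth and regularity of each product $I_aJ_b$. Unwinding the recursion from $P_{n,0} = I_n$ to $P_{n,n} = Q_n$, and handling the boundary terms $t \in \{1,n\}$ via flat base change exactly as in the proof of Theorem \ref{thm_depthreg_binomialproduct}, produces the claimed equalities. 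The main obstacle is the Tor-vanishing of $\iota_2$, because $P_{n,t-1}$ is a genuine sum of several distinct products and does not itself split as a single tensor product over $k$; the factorization through the single summand $I_{n-t+1}J_{t-1}$ is the trick that reduces this to the K\"unneth setting.
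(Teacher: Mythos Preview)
Your proposal is correct and follows essentially the same approach as the paper: the paper also defines $P_{n,t}$, identifies $P_{n,t-1}\cap I_{n-t}J_t = I_{n-t+1}J_t$, verifies Tor-vanishing of $\iota_1$ and of $\iota_2$ via the factorization through $I_{n-t+1}J_{t-1}\subseteq P_{n,t-1}$, invokes Lemma~\ref{lem_Bettisplit} to obtain a Betti splitting, and then reruns the recursion of Theorem~\ref{thm_depthreg_binomialproduct} with equalities from Lemma~\ref{lem_depthreg_Bettisplit}. The only cosmetic difference is that the paper justifies the Tor-vanishing of $I_{n-t+1}J_t\to I_{n-t}J_t$ (and the analogous map on the $J$-side) by saying ``the tensor product with $J_t$ is an exact functor,'' whereas you spell this out via the K\"unneth decomposition of $\Tor^R_\bullet(k,-)$; these are the same fact.
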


\begin{proof}
We shall only prove the  formula for depth. The formula for regularity can be proved similarly.

For $t = 0, \dots, n$, set
$$P_{n,t} :=  I_nJ_0 + I_{n-1}J_1 + \cdots + I_{n-t}J_t.$$
Then $P_{n,0} = I_n$,  $P_{n,n} = Q_n$, and $P_{n,t} = P_{n,t-1}+I_{n-t}J_t$ for $t \ge 1$.
We will compute $\depth R/Q_n$ by successively computing $\depth R/P_{n,t}$.
To do that we will first show that $P_{n,t} = P_{n,t-1}+I_{n-t}J_t$ is a Betti splitting for $1\le t\le n$.

We have seen in the proof of Theorem \ref{thm_depthreg_binomialproduct} that
$$P_{n,t-1} \cap I_{n-t}J_t = I_{n-t+1}J_t.$$
Note that $I_{n-t+1}J_t = I_{n-t+1} \otimes_kJ_t$ and $I_{n-t}J_t = I_{n-t}\otimes_kJ_t$.
By the hypothesis, the inclusion map $I_{n-t+1} \to I_{n-t}$ is Tor-vanishing.
Since the tensor product with $J_t$ is an exact functor, the inclusion map $I_{n-t+1}J_t \to I_{n-t}J_t$ is also Tor-vanishing.
Similarly, the inclusion map $I_{n-t+1}J_t\to I_{n-t+1}J_{t-1}$ is Tor-vanishing.
Since $I_{n-t+1}J_t \subseteq I_{n-t+1}J_{t-1} \subseteq P_{n,t-1}$, the inclusion map $I_{n-t+1}J_t \to P_{n,t-1}$ is also Tor-vanishing.
By Lemma \ref{lem_Bettisplit}, it follows that $P_{n,t} = P_{n,t-1}+I_{n-t}J_t$ is a Betti splitting.

Now we can apply Lemma \ref{lem_depthreg_Bettisplit} to obtain
\[
\depth R/P_{n,t} =\min\left\{\depth R/P_{n,t-1}, \depth R/I_{n-t}J_t, \depth R/I_{n-t+1}J_t-1\right\}.
\]
Proceeding as in the proof of Theorem \ref{thm_depthreg_binomialproduct} we will get the desired conclusion.
\end{proof}

Tor-vanishing filtrations can be found by the following result of Ahangari Maleki \cite{AM} in the case $\chara(k)=0$.
Let $\partial(Q)$ denote the the ideal generated by the partial derivatives of the generators of an ideal $Q$. Using the chain rule for taking partial derivatives, it is not hard to see that $\partial(Q)$ does not depend on the chosen generators of $Q$.

\begin{lemma}
\label{lem_maleki} \cite[Propostion 3.5]{AM}
Assume that $\chara(k)=0$. Let $Q \subseteq Q'$ be homogeneous ideals such that $\partial(Q) \subseteq Q'$.
Then the inclusion map $Q \to Q'$ is Tor-vanishing.
\end{lemma}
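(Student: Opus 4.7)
The plan is to exploit characteristic $0$ via Euler's identity to construct an explicit $k$-linear chain null-homotopy at the level of the Koszul complex, which then yields the vanishing of $\Tor^R_i(k,\iota)$ for all $i\ge 0$.

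First I would recall that $\Tor_i^R(k, L)$ can be computed as the $i$-th homology of the Koszul complex $K_\bullet(\bsx; L) := K_\bullet(x_1,\ldots,x_n;R)\otimes_R L$, whose terms are $K_p(\bsx; L) = \bigwedge^p R(-1)^n \otimes_R L$ and whose differential sends $fe_I$ to $\sum_{\ell\in I}(-1)^{\mathrm{pos}(\ell,I)-1}\, x_\ell f\, e_{I\setminus \ell}$. The map $\Tor_i^R(k,\iota)$ is then the induced map on homology of $\iota_*: K_\bullet(\bsx; Q) \to K_\bullet(\bsx; Q')$.

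Next I would introduce the $k$-linear (not $R$-linear) operator
\[
H: K_\bullet(\bsx; Q)\longrightarrow K_{\bullet+1}(\bsx; Q'),\qquad H(fe_I) = \sum_{k=1}^n \partial_k(f)\, e_k\wedge e_I,
\]
which is well defined precisely because, by hypothesis, $\partial_k(f)\in \partial(Q)\subseteq Q'$ for every $f\in Q$. The crucial identity to establish is
\[
d_K\circ H + H\circ d_K \;=\; \tilde E\circ \iota_*,
\]
where $\tilde E$ acts on a homogeneous element by multiplication by its total internal degree, $\tilde E(fe_I) = (\deg f + |I|)\, fe_I$. Two ingredients enter here: Euler's identity $(\deg f)\, f = \sum_k x_k\partial_k(f)$, which requires $\chara k = 0$, together with a sign bookkeeping that cancels the cross terms $x_\ell \partial_k(f)$ with $k\ne \ell$ arising from $d_K H$ and $Hd_K$.

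Finally, let $z\in K_i(\bsx; Q)$ be a homogeneous cycle of total degree $t$. Because $Q$ is a proper homogeneous ideal, $Q$ has no elements of nonpositive degree, so together with $|I|\ge 0$ every nonzero homogeneous cycle satisfies $t \ge 1$. The homotopy identity then gives $t\cdot \iota_*(z) = d_K(H(z))$, whence $\iota_*(z) = d_K(t^{-1}H(z))$ is a boundary in $K_\bullet(\bsx; Q')$; here invertibility of $t$ uses $\chara k=0$. Hence the class of $\iota_*(z)$ in $\Tor^R_i(k,Q')$ vanishes, and $\Tor^R_i(k,\iota) = 0$ for all $i\ge 0$. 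The main obstacle is the sign analysis that establishes the homotopy identity: conceptually $H$ is just a Koszul-valued ``antiderivative'' dictated by Euler's formula, but the cancellation of the cross terms requires careful tracking of wedge-product signs.
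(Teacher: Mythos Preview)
Your argument is correct. The paper does not supply its own proof of this lemma; it is quoted verbatim from \cite[Proof of Theorem 2.5]{AM}, and the construction you give---an Euler-type $k$-linear homotopy $H(fe_I)=\sum_k\partial_k(f)\,e_k\wedge e_I$ on the Koszul resolution of $k$, yielding $d_KH+Hd_K=(\deg f+|I|)\cdot\iota_*$ via Euler's identity---is exactly the mechanism underlying Ahangari Maleki's argument. One cosmetic point: your intermediate claim that $\partial_k(f)\in\partial(Q)$ for every $f\in Q$ is not literally how $\partial(Q)$ is defined (it is generated by partials of a fixed generating set), but the Leibniz rule gives $\partial_k(f)\in Q+\partial(Q)\subseteq Q'$, which is all you need; alternatively, in characteristic zero Euler's identity forces $Q\subseteq\partial(Q)$, so your statement is in fact true. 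The sign bookkeeping you flag as the main obstacle is routine once one uses the DG-algebra structure of the Koszul complex and the Leibniz rule $d(e_k\wedge e_I)=x_k e_I-e_k\wedge d(e_I)$: the cross terms from $d_KH$ and $Hd_K$ cancel on the nose, and the $\delta_{k\ell}f$ terms arising from $\partial_k(x_\ell f)$ contribute exactly $|I|\,fe_I$.
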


\begin{proposition}
\label{mapoftors_symbolicpower}
Let $I$ be a nonzero proper homogeneous ideal in $A$.
If $\chara(k)=0$ then $\partial(I^{(n)}) \subseteq I^{(n-1)}$ for all $n \ge 1$.
Therefore, the filtration $\{I^{(n)}\}_{n \ge 0}$ is Tor-vanishing.
\end{proposition}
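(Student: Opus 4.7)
My plan is to prove the inclusion $\partial(I^{(n)}) \subseteq I^{(n-1)}$ first by a localization argument, and then to deduce the Tor-vanishing statement as a direct application of Lemma~\ref{lem_maleki}.

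For the inclusion, it suffices to show that for every coordinate derivation $\partial = \partial/\partial x_i$ on $A = k[X]$ and every $f \in I^{(n)}$, the element $\partial(f)$ lies in $I^{(n-1)}$. The idea is to work locally at each minimal prime $P \in \Min(I)$. The derivation $\partial$ extends uniquely to a $k$-linear derivation of $A_P$ via the quotient rule
\[
\partial(a/s) = (s\,\partial(a) - a\,\partial(s))/s^2,
\]
and applying the Leibniz rule to the $n$-fold products that generate $I^n A_P$ as an $A_P$-ideal yields
\[
\partial(I^n A_P) \subseteq I^{n-1} A_P.
\]
Since $f \in I^{(n)} = A \cap \bigcap_{P \in \Min(I)} I^n A_P$, the image of $f$ in $A_P$ lies in $I^n A_P$ for every $P \in \Min(I)$; hence $\partial(f) \in I^{n-1} A_P$ for every such $P$. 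Combined with the obvious $\partial(f) \in A$, this gives
\[
\partial(f) \in A \cap \bigcap_{P \in \Min(I)} I^{n-1} A_P = I^{(n-1)}.
\]

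For the Tor-vanishing conclusion, I apply Lemma~\ref{lem_maleki} with $Q = I^{(n)}$ and $Q' = I^{(n-1)}$: the inclusion $Q \subseteq Q'$ is clear because symbolic powers form a decreasing filtration, and $\partial(Q) \subseteq Q'$ is precisely the inclusion established above. This is the step where the hypothesis $\chara(k) = 0$ enters, via Lemma~\ref{lem_maleki}. The resulting Tor-vanishing of every inclusion $I^{(n)} \to I^{(n-1)}$ is the defining property of a Tor-vanishing filtration.

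I do not anticipate a serious obstacle. The only bookkeeping point is verifying that the partial derivative of $f$ computed in $A$ agrees, after localization, with the image of $f$ under the extended derivation of $A_P$; this is immediate by applying the quotient rule to $f/1$. Note that the inclusion $\partial(I^{(n)}) \subseteq I^{(n-1)}$ itself is characteristic-free, being a formal consequence of localization and Leibniz; the characteristic-zero hypothesis is needed only to pass from this inclusion of ideals to the Tor-vanishing of the corresponding inclusion of modules through Lemma~\ref{lem_maleki}.
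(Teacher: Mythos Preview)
Your argument is correct, and it takes a genuinely different route from the paper's.

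The paper proves the inclusion $\partial(I^{(n)}) \subseteq I^{(n-1)}$ without ever localizing. It invokes Brodmann's theorem to form $L = \bigcap_{P \in \Ass^*(I) \setminus \Min(I)} P$, so that $I^{(m)} = \bigcup_{t \ge 0} I^m : L^t$. Choosing $t$ with $I^{(n)} L^t \subseteq I^n$, it applies Leibniz \emph{inside $A$}: for $f \in I^{(n)}$ and $g \in L^{t+1}$ one has $\partial_x(fg) \in \partial(I^n) \subseteq I^{n-1}$ and $f\,\partial_x(g) \in I^{(n)} L^t \subseteq I^n$, hence $\partial_x(f)\,g \in I^{n-1}$, i.e.\ $\partial_x(f) \in I^{n-1} : L^{t+1} \subseteq I^{(n-1)}$. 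Your approach instead extends each $\partial_{x_i}$ to $A_P$ by the quotient rule and applies Leibniz there to get $\partial(I^n A_P) \subseteq I^{n-1} A_P$; the conclusion then drops out of the very definition $I^{(m)} = A \cap \bigcap_{P \in \Min(I)} I^m A_P$. Your route is more elementary in that it avoids Brodmann's finiteness result and the auxiliary ideal $L$; the paper's route has the minor advantage of never leaving $A$. Both establish the inclusion characteristic-free, with $\chara(k)=0$ used only when invoking Lemma~\ref{lem_maleki} for the Tor-vanishing, exactly as you observe.
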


\begin{proof}
Let $\Ass^*(I) := \bigcup_{n=1}^{\infty}\Ass_A(I^n)$.
This is a finite set due to a classical result of Brodmann \cite{Br1}.
Set
$$L=\bigcap_{P\in \Ass^*(I) \setminus \Min(I)}P,$$
where $L =A$ if $\Ass^*(I) = \Min(I)$.
It follows from the definition of symbolic powers that $I^{(m)} = \cup_{t \ge 0}I^m:L^t$ for all $m \ge 0$.

Let $t \ge 1$ be an integer such that $I^{(n)}L^t \subseteq I^n$.
For any $f \in I^{(n)}$ and $g \in L^{t+1}$, we have $fg \in I^n$.
Let $\partial_x$ denote the partial derivative with respect to an arbitrary variable $x$ of $A$.
We have
$$\partial_x(f)g+f\partial_x(g) = \partial_x(fg) \in \partial(I^n) \subseteq I^{n-1}.$$
Since $\partial_x(g)  \in \partial(L^{t+1}) \subseteq L^t$, we have
$$f\partial_x(g) \in fL^t \subseteq I^n.$$
Therefore, $\partial_x(f)g = \partial_x(fg) - f\partial_x(g) \in I^{n-1}$. Hence, $\partial_x(f) \in I^{n-1}:L^{t+1} \subseteq I^{(n-1)}$.
So we can conclude that $\partial(I^{(n)}) \subseteq I^{(n-1)}$.
\end{proof}

Now, we can show that the bounds in Theorem \ref{first bound} are actually the exact values of the depth and the regularity of $R/(I+J)^{(n)}$ if $\chara(k)=0$.

\begin{theorem}
\label{thm_depthreg_char0monomial}
Let $I$ and $J$ be nonzero proper homogeneous ideals in $A$ and $B$.
Assume that $\chara(k)=0$. Then for all $n\ge 1$, we have
\begin{enumerate}[\quad \rm(i)]
\item $\depth R/(I+J)^{(n)}=$
$$\min_{\begin{subarray}{l} i \in [1,n-1]\\ j \in [1,n] \end{subarray}} \left\{\depth  A/I^{(n-i)}  +  \depth B/J^{(i)} + 1, \depth A/I^{(n-j+1)} + \depth B/J^{(j)}\right\},$$
\item $\reg R/(I+J)^{(n)} =$
$$\max_{\begin{subarray}{l} i \in [1,n-1]\\ j \in [1,n] \end{subarray}} \left\{\reg A/I^{(n-i)} +  \reg B/J^{(i)} + 1, \reg A/I^{(n-j+1)} + \reg B/J^{(j)}\right\}.$$
\end{enumerate}
\end{theorem}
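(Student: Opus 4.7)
The plan is to obtain Theorem \ref{thm_depthreg_char0monomial} as a direct synthesis of three earlier results: the binomial expansion of Theorem \ref{binomial}, the Tor-vanishing of symbolic filtrations in characteristic zero given by Proposition \ref{mapoftors_symbolicpower}, and the exact-formula theorem for binomial sums of Tor-vanishing filtrations, Theorem \ref{thm_Bettisplitting_binomialproduct}. Essentially all of the genuine work has been pushed into these supporting statements; what remains is to verify that their hypotheses align and then to chain them together.

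First I would check that Proposition \ref{mapoftors_symbolicpower} applies to both factors: since $\chara(k)=0$ and $I\subseteq A$, $J\subseteq B$ are nonzero proper homogeneous ideals in polynomial rings, the proposition yields that $\{I^{(i)}\}_{i \ge 0}$ is a Tor-vanishing filtration in $A$ and that $\{J^{(j)}\}_{j \ge 0}$ is a Tor-vanishing filtration in $B$. These are precisely the hypotheses required by Theorem \ref{thm_Bettisplitting_binomialproduct}.

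Next I would specialize Theorem \ref{thm_Bettisplitting_binomialproduct} to the filtrations $I_i := I^{(i)}$ and $J_j := J^{(j)}$. The theorem states equalities
\begin{align*}
\depth R/Q_n &= \min_{\begin{subarray}{l} i \in [1,n-1]\\ j \in [1,n] \end{subarray}}\left\{\depth A/I^{(n-i)} + \depth B/J^{(i)} + 1,\ \depth A/I^{(n-j+1)} + \depth B/J^{(j)}\right\},\\
\reg R/Q_n &= \max_{\begin{subarray}{l} i \in [1,n-1]\\ j \in [1,n] \end{subarray}}\left\{\reg A/I^{(n-i)} + \reg B/J^{(i)} + 1,\ \reg A/I^{(n-j+1)} + \reg B/J^{(j)}\right\},
\end{align*}
where $Q_n = \sum_{i+j=n} I^{(i)} J^{(j)}$. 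By Theorem \ref{binomial}, $Q_n = (I+J)^{(n)}$, so these are exactly the two equalities claimed in Theorem \ref{thm_depthreg_char0monomial}.

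There is no real obstacle at this stage; the difficulty has been absorbed into the earlier machinery. The subtlest ingredient is Proposition \ref{mapoftors_symbolicpower}, whose proof uses the finiteness of $\Ass^*_A(I)$ together with Lemma \ref{lem_maleki} (Ahangari Maleki's criterion via partial derivatives, which is where characteristic zero enters). Once that is granted, transferring Tor-vanishing across the tensor product and iterating the Betti splitting argument in the proof of Theorem \ref{thm_Bettisplitting_binomialproduct} requires only that $-\otimes_k J_t$ be exact, which is automatic. Thus the proof is simply a three-line invocation of the existing results.
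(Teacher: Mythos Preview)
Your proposal is correct and follows exactly the same approach as the paper: invoke Proposition \ref{mapoftors_symbolicpower} to obtain Tor-vanishing of the symbolic filtrations in characteristic zero, then apply Theorem \ref{thm_Bettisplitting_binomialproduct} to those filtrations, and finally use Theorem \ref{binomial} to identify $Q_n$ with $(I+J)^{(n)}$.
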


\begin{proof}
By Proposition \ref{mapoftors_symbolicpower}, the filtrations $\{I^{(i)}\}_{i \ge 0}$ and $\{J^{(j)}\}_{j \ge 0}$ are Tor-vanishing. Therefore, the conclusion follows from Theorem \ref{binomial} and Theorem \ref{thm_Bettisplitting_binomialproduct}.
\end{proof}

The following example shows that both equalities of Theorem \ref{thm_depthreg_char0monomial} may fail if one of the base rings is not regular, even if $\chara(k)=0$.

\begin{example}
\label{ex_nonpolynomial}
By abuse of notations, we will denote the residue class of an element $f$ by $f$ itself.
Let $S=\QQ[a,b,c,d]$. Put $A = S/(a^4,a^3b,ab^3,b^4,a^3c-b^3d)$ and $I =(a^2-b^2)A$.
Computations with Macaulay2 \cite{GS} show that $I^{(1)}= I, I^{(2)}= I^2= (a^2b^2)A$.
Hence, there are isomorphisms of $S$-modules
\begin{align*}
A/I^{(1)} &\cong S/(a^2-b^2,a^4,a^3b,a^3c-b^3d),\\
A/I^{(2)} &\cong S/(a^4,a^3b,a^2b^2,ab^3,b^4,a^3c-b^3d).
\end{align*}
This implies that
\begin{align*}
\depth(A/I^{(1)}) &=\depth(A/I^{(2)})=1,\\
\reg(A/I^{(1)})   &=\reg(A/I^{(2)})=3.
\end{align*}
Let $B =\QQ[x,y,z,t]$ and $J=(x^5,x^4y,xy^4,y^5,x^2y^2(xz^6-yt^6),x^3y^3) \subseteq B$.
Computations with Macaulay2 give
$J^{(1)}=J=(x^5,x^4y,xy^4,y^5,x^2y^2(xz^6-yt^6),x^3y^3)$, $J^{(2)}=(x,y)^{10}$, and
\begin{align*}
\depth(B/J^{(1)})&=1,\ \depth(B/J^{(2)})=2,\\
\reg(B/J^{(1)})  &=10,\ \reg(B/J^{(2)})=9.
\end{align*}
Let $R = A \otimes_kB$ and $Q=I+J \subseteq R$. Then
$$R=\QQ[a,b,c,d,x,y,z,t]/(a^4,a^3b,ab^3,b^4,a^3c-b^3d). $$
$$Q= (a^2-b^2,x^5,x^4y,xy^4,y^5,x^2y^2(xz^6-yt^6),x^3y^3)R.$$
By Theorem \ref{binomial}, we have
\begin{align*}
Q^{(2)} &=I^{(2)}+I^{(1)}J^{(1)}+J^{(2)}\\
        &=(a^2b^2,(a^2-b^2)(x^5,x^4y,xy^4,y^5,x^2y^2(xz^6-yt^6),x^3y^3,(x,y)^{10})R.
\end{align*}
It can be checked that
\begin{align*}
\depth(R/Q^{(2)}) &=3 > 2=\depth(A/I^{(2)})+\depth(B/J^{(1)}),\\
\reg(R/Q^{(2)})   &=13 < 14=\reg(A/I^{(1)})+\reg(B/J^{(1)})+1.
\end{align*}
Hence, both equalities of Theorem \ref{thm_depthreg_char0monomial} fail in this example.
\end{example}

The Tor-vanishing of the symbolic powers in Proposition \ref{mapoftors_symbolicpower} can be considered as a higher order generalization of the inclusion $I^{(n)} \subseteq \mm I^{(n-1)}$, which was proved by Eisenbud and Mazur for $\chara(k) = 0$ \cite[Proposition 13]{EM} and for $I$ being a monomial ideal \cite[Proposition 9]{EM}.

Eisenbud and Mazur \cite{EM} conjectured that $I^{(2)} \subseteq \mm I^{(1)}$ if $I$ is a prime ideal in a power series ring over a field of characteristic zero (see \cite{HaHu} for similar conjectures on higher symbolic powers).
They also showed that this conjecture has a negative answer in positive characteristic. Using their result we can construct the following example, which shows that Proposition \ref{mapoftors_symbolicpower} does not hold in positive characteristic.

\begin{example}
\label{ex_Eisenbud-Mazur}
Let $R=(\mathbb Z/2)[x,y,z,t]$ and $S=(\mathbb Z/2)[u]$. Consider the kernel $L$ of the map $R \to S$ given by
$$x \mapsto u^4, y\mapsto u^6, z \mapsto u^7, t \mapsto u^9.$$
Clearly, $L$ is a prime ideal. Hence, $L^{(1)}=L$.
Computations with Macaulay2 \cite{GS} show that
$L$ is minimally generated by the following polynomials:
$$g_1=x^3+y^2, g_2=yz+xt, g_3=x^2y+z^2, xz^2+t^2, x^2z+yt,xy^2+zt.$$
The map $\Tor^R_0(k,L^{(2)})\to \Tor^R_0(k,L)$ is not zero. In fact, for $f=x^3y-y^3-xz^2+t^2$, we have $x^2f=g_1g_3+g_2^2 \in L^2$, which shows that $f\in L^{(2)}$. On the other hand, we have $f \notin \mm_R L$, where $\mm_R =(x,y,z,t)$. \par
While the ideal $L$ is not homogeneous in the standard grading, it is weighted homogeneous by setting
$\deg x=4, \deg y=6, \deg z=7, \deg t=9.$
Now we use the polarization trick of J. McCullough and I. Peeva \cite{MP} to transform $L$ into a homogeneous ideal in the standard grading. Let $A=(\mathbb Z/2)[x_1,x_2,x_3,x_4,y_1,y_2,y_3,y_4]$.
Consider the homogeneous map $\phi: R\to A$ given by
$$x\mapsto x_1^3y_1, y\mapsto x_2^5y_2, z\mapsto x_3^6y_3, t \mapsto x_4^8y_4.$$
Let $I$ be the extension of $L$ to $A$. Then one can check with Macaulay2  that $I$ is a prime ideal of $A$, and more importantly, $I$ is homogeneous in the standard grading of $A$. Let $\mm_A =(x_1,\ldots,x_4,y_1,\ldots,y_4)$. Then the relation $x^2f=g_1g_3+g_2^2$ shows that $\phi(f) \in I^{(2)} \setminus \mm_A I$. Hence, the map $I^{(2)}\to I^{(1)}$ is not Tor-vanishing. Using similar arguments and \cite[Example, p. 200]{EM}, we even have similar examples in any positive characteristic.
\end{example}

We could not use  Example \ref{ex_Eisenbud-Mazur} to construct any counterexample to the conclusion of Theorem \ref{thm_depthreg_char0monomial} in positive characteristics.  We expect that Theorem \ref{thm_depthreg_char0monomial} holds regardless of the characteristic of $k$. \par

Our next main result shows that this is the case  for monomial ideals.
We shall first collect alternatives for Lemma \ref{lem_maleki} and Proposition \ref{mapoftors_symbolicpower} in this case, and then present the result in Theorem \ref{monomial}.

For a monomial ideal $Q$ we denote by $\partial^*(Q)$ the ideal generated by elements of the form $f/x$, where $f$ is a minimal monomial generator of $Q$ and $x$ is a variable dividing $f$.

\begin{lemma}  \label{NgV} \cite[Proposition 4.4 and Lemma 4.2]{NgV}
Let $Q \subseteq Q'$ be monomial ideals such that $\partial^*(Q) \subseteq Q'$.
Then the inclusion map $Q \to Q'$ is Tor-vanishing.
\end{lemma}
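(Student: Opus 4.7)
The plan is to use the multigraded structure of $\Tor$ for monomial ideals. For each multidegree $\mathbf{a} \in \NN^n$, I would work with the upper Koszul simplicial complex
$$K^Q_{\mathbf{a}} := \{\tau \subseteq \supp(\mathbf{a}) : \mathbf{x}^{\mathbf{a} - \tau} \in Q\}.$$
The multigraded piece in degree $\mathbf{a}$ of the Koszul complex $K_\bullet(x_1,\ldots,x_n;Q)$ is naturally identified with the augmented reduced chain complex of $K^Q_{\mathbf{a}}$, yielding the standard isomorphism
$$\Tor_i^R(k,Q)_{\mathbf{a}} \cong \widetilde{H}_{i-1}(K^Q_{\mathbf{a}};k).$$
The inclusion $Q \hookrightarrow Q'$ restricts on each multidegree to the simplicial inclusion $K^Q_{\mathbf{a}} \hookrightarrow K^{Q'}_{\mathbf{a}}$, so it suffices to show that this inclusion induces the zero map on reduced homology for every $\mathbf{a}$.

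Next, I would construct a contractible intermediate complex via a cone vertex drawn from a generator of $Q$. Fix $\mathbf{a}$ with $K^Q_{\mathbf{a}}$ nonempty; this forces $\mathbf{x}^{\mathbf{a}} \in Q$, since otherwise $K^Q_{\mathbf{a}}$ is the void complex and its reduced homology vanishes automatically. Choose a minimal monomial generator $f$ of $Q$ dividing $\mathbf{x}^{\mathbf{a}}$, and pick any variable $v$ dividing $f$. The central claim is that for every face $\sigma \in K^Q_{\mathbf{a}}$, one has $\sigma \cup \{v\} \in K^{Q'}_{\mathbf{a}}$. Granting this, the inclusion factors as
$$K^Q_{\mathbf{a}} \hookrightarrow v \ast K^Q_{\mathbf{a}} \hookrightarrow K^{Q'}_{\mathbf{a}},$$
where the cone $v \ast K^Q_{\mathbf{a}}$ is contractible, so the map on $\widetilde H_\bullet$ is zero.

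The main step, where the hypothesis $\partial^*(Q) \subseteq Q'$ is used, is to prove the claim. If $v \in \sigma$ there is nothing to show since $K^Q_{\mathbf{a}} \subseteq K^{Q'}_{\mathbf{a}}$ already. If $v \notin \sigma$, set $m := \mathbf{x}^{\mathbf{a}-\sigma} \in Q$ and choose a minimal generator $g$ of $Q$ with $g \mid m$. I would then split into two cases. If $v \mid g$, then $g/v \in \partial^*(Q) \subseteq Q'$, so $m/v = (m/g)(g/v) \in Q'$. If $v \nmid g$, then $v \in \supp(\mathbf{a})$ together with $v \notin \sigma$ gives $v \mid m$, so $v$ divides the complementary factor $m/g$; hence $m/v = g \cdot\big((m/g)/v\big) \in Q \subseteq Q'$. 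Either way $\mathbf{x}^{\mathbf{a}-\sigma-v} = m/v \in Q'$, so $\sigma \cup \{v\} \in K^{Q'}_{\mathbf{a}}$.

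The main conceptual obstacle is not the case analysis but the observation that a single cone vertex $v$ can be chosen uniformly for all $\sigma \in K^Q_{\mathbf{a}}$: $v$ depends only on a minimal generator of $Q$ dividing $\mathbf{x}^{\mathbf{a}}$, and the hypothesis $\partial^*(Q) \subseteq Q'$ is strong enough to absorb both the case when $v$ divides the generator $g$ witnessing $m \in Q$ and the case when it does not. Once this is recognized, the rest of the argument is automatic.
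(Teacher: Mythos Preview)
Your argument is correct. The paper does not supply its own proof of this lemma; it simply cites \cite[Proposition 4.4]{NgV}. Your approach via upper Koszul simplicial complexes is the standard one and is carried out cleanly: the identification $\Tor_i^R(k,Q)_{\mathbf{a}} \cong \widetilde H_{i-1}(K^Q_{\mathbf{a}};k)$ is functorial in $Q$, and your case analysis verifying that $\sigma \cup \{v\} \in K^{Q'}_{\mathbf{a}}$ for every $\sigma \in K^Q_{\mathbf{a}}$ is sound. The only point worth making explicit is that the intermediate complex $\Delta = \{\sigma, \sigma \cup \{v\} : \sigma \in K^Q_{\mathbf{a}}\}$ is genuinely a cone with apex $v$ even when $v$ is already a vertex of $K^Q_{\mathbf{a}}$ (every face of $\Delta$ has $\sigma \cup \{v\}$ as a face of $\Delta$), hence acyclic; you essentially say this, but it deserves one sentence since the ``cone'' terminology usually presumes a new vertex.
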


\begin{proposition}
\label{function_monomial}
Let $I$ be a nonzero proper monomial ideal in $A$.
Then $\partial^*(I^{(n)}) \subseteq I^{(n-1)}$ for all $n \ge 1$.
Therefore, the filtration $\{I^{(n)}\}_{n \ge 0}$ is Tor-vanishing.
\end{proposition}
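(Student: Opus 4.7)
The plan is to verify $\partial^*(I^{(n)}) \subseteq I^{(n-1)}$ directly from the definition of symbolic powers, exploiting the fact that $I$ is monomial and hence all its minimal primes $P_i$ are monomial primes, say $P_i = (x_j : j \in \sigma_i)$. The Tor-vanishing of the filtration is then immediate from Lemma \ref{NgV}.

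First I would reduce to checking the inclusion locally at each minimal prime: since $I^{(n)} = \bigcap_{P \in \Min(I)} (I^n R_P \cap R)$, it suffices to show that for every minimal prime $P_i$ and every monomial $f \in I^n R_{P_i} \cap R$ with $x \mid f$, one has $f/x \in I^{n-1} R_{P_i} \cap R$. Because $I$ and $P_i$ are monomial, the condition $f \in I^n R_{P_i} \cap R$ is equivalent to the existence of a monomial $s$ supported on $\{x_j : j \notin \sigma_i\}$ with $sf \in I^n$, so we may write $sf = h\, g_1 \cdots g_n$ where $h$ is a monomial and $g_1,\ldots,g_n$ are minimal monomial generators of $I$.

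The core of the argument is a short case analysis on the variable $x$. If $x \notin \sigma_i$, then $sx$ is still supported on $\sigma_i^c$, and $(sx)(f/x) = sf \in I^n \subseteq I^{n-1}$, so $f/x \in I^{n-1} R_{P_i} \cap R$. If $x \in \sigma_i$, then $x$ does not divide $s$, so the divisibility $x \mid sf = h\, g_1 \cdots g_n$ forces either $x \mid h$, in which case $s(f/x) = (h/x)\, g_1 \cdots g_n \in I^n$, or $x \mid g_k$ for some $k$, in which case $s(f/x) = h\, g_1 \cdots (g_k/x) \cdots g_n$ is a monomial multiple of $g_1 \cdots \widehat{g_k} \cdots g_n \in I^{n-1}$. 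In either situation $s(f/x) \in I^{n-1}$, so $f/x \in I^{n-1} R_{P_i} \cap R$.

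Taking the intersection over $P_i \in \Min(I)$ then yields $f/x \in I^{(n-1)}$, proving $\partial^*(I^{(n)}) \subseteq I^{(n-1)}$. Applying Lemma \ref{NgV} to the pair $I^{(n)} \subseteq I^{(n-1)}$ will conclude that each inclusion $I^{(n)} \hookrightarrow I^{(n-1)}$ is Tor-vanishing, which is precisely the Tor-vanishing condition for the filtration. There is no serious obstacle here; the only delicate point is to correctly translate the localization condition defining $I^n R_{P_i} \cap R$ into the existence of the monomial multiplier $s$ supported on $\sigma_i^c$, and then keep track of where the single variable $x$ can appear in the factorization $sf = h\, g_1 \cdots g_n$.
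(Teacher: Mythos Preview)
Your argument is correct. You work directly from the definition $I^{(m)} = \bigcap_{P\in\Min(I)} (I^m R_P \cap R)$, translate membership in each contracted localization into the existence of a monomial multiplier supported outside the face $\sigma_i$, and then run a clean case split on whether the variable $x$ lies in $\sigma_i$. Each case lands $s(f/x)$ (or $sx\cdot(f/x)$) in $I^{n-1}$ with the multiplier still outside $P_i$, which is exactly what is needed.

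The paper takes a different and shorter route: it invokes the identity $I^{(n)} = Q_1^n \cap \cdots \cap Q_s^n$ for the minimal primary components $Q_i$ of $I$ (citing \cite[Theorem 3.7]{CEHH}), then uses the elementary observation $\partial^*(Q^n)\subseteq Q^{n-1}$ for any monomial ideal $Q$, together with $\partial^*(\bigcap_i Q_i^n)\subseteq \bigcap_i \partial^*(Q_i^n)$. So the paper outsources the real work to the structural result on symbolic powers of monomial ideals, while your proof is self-contained and avoids that citation at the cost of the explicit case analysis. Both approaches ultimately hinge on the same combinatorial fact that dividing a monomial in $I^n$ by one variable lands in $I^{n-1}$ after possibly adjusting by a unit in the localization; your version just makes this visible at the level of $I$ itself rather than at the level of the primary components.
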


\begin{proof}
Let $Q_1,\ldots,Q_s$ be the primary components of $I$ associated to the minimal primes of $I$.
Note that $Q_1,\ldots,Q_s$ are monomial ideals.
By \cite[Lemma 3.1]{HHT}, we have
\[
I^{(n)}= Q_1^n \cap \cdots \cap Q_s^n.
\]
It is clear that $\partial^*(Q^n)=\partial^*(Q)Q^{n-1} \subseteq Q^{n-1}$ for any monomial ideal $Q$.
Therefore,
$$\partial^*(I^{(n)}) \subseteq \partial^*(Q_1^n) \cap \cdots \cap \partial^*(Q_s^n) \subseteq Q_1^{n-1} \cap \cdots \cap Q_s^{n-1}= I^{(n-1)}.$$
By Lemma \ref{NgV}, this implies that $\{I^{(n)}\}_{n \ge 0}$ is Tor-vanishing.
\end{proof}

\begin{theorem} \label{monomial}
Let $I$ and $J$ be nonzero proper monomial ideals in $A$ and $B$.
Then the equalities of Theorem \ref{thm_depthreg_char0monomial} hold regardless of the characteristic of $k$.
\end{theorem}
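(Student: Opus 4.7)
The proof should follow the same template as Theorem \ref{thm_depthreg_char0monomial}, with the characteristic zero ingredient replaced by its monomial analogue. The plan is to show that the filtrations of symbolic powers of $I$ and $J$ are Tor-vanishing, and then invoke Theorem \ref{thm_Bettisplitting_binomialproduct} together with the binomial expansion of Theorem \ref{binomial}.

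More precisely, I would argue as follows. Since $I \subset A$ and $J \subset B$ are nonzero proper monomial ideals, Proposition \ref{function_monomial} applies to each of them, yielding $\partial^*(I^{(n)}) \subseteq I^{(n-1)}$ and $\partial^*(J^{(n)}) \subseteq J^{(n-1)}$ for every $n \ge 1$. By Lemma \ref{NgV}, the inclusion maps $I^{(n)} \to I^{(n-1)}$ and $J^{(n)} \to J^{(n-1)}$ are Tor-vanishing for all $n \ge 1$. In other words, the filtrations $\{I^{(n)}\}_{n \ge 0}$ and $\{J^{(n)}\}_{n \ge 0}$ are Tor-vanishing filtrations in the sense introduced in Section \ref{sect_splittings}.

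Next, by Theorem \ref{binomial} we have the identification
\[
(I+J)^{(n)} = \sum_{i+j = n} I^{(i)} J^{(j)} = Q_n,
\]
where $Q_n$ denotes the $n$-th binomial sum of the symbolic filtrations $\{I^{(i)}\}_{i \ge 0}$ and $\{J^{(j)}\}_{j \ge 0}$. Therefore Theorem \ref{thm_Bettisplitting_binomialproduct}, applied to these two Tor-vanishing filtrations, delivers the desired equalities
\[
\depth R/(I+J)^{(n)} \;=\; \min_{\begin{subarray}{l} i \in [1,n-1]\\ j \in [1,n] \end{subarray}} \left\{\depth A/I^{(n-i)} + \depth B/J^{(i)} + 1,\; \depth A/I^{(n-j+1)} + \depth B/J^{(j)}\right\},
\]
and the analogous formula for regularity, with no assumption on $\chara(k)$.

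There is no genuine obstacle: all the heavy lifting has already been done. The only thing worth emphasizing is that the role played by the partial-derivative criterion (Lemma \ref{lem_maleki}) in characteristic zero is taken over, in the monomial setting, by the combinatorial criterion of Lemma \ref{NgV}, via the primary decomposition identity $I^{(n)} = Q_1^n \cap \cdots \cap Q_s^n$ of \cite{CEHH} used in Proposition \ref{function_monomial}. Once this substitution is made, the rest of the argument is formally identical to the proof of Theorem \ref{thm_depthreg_char0monomial}.
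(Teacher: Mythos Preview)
Your proposal is correct and follows essentially the same approach as the paper: apply Proposition \ref{function_monomial} to see that the symbolic filtrations of $I$ and $J$ are Tor-vanishing, then combine Theorem \ref{binomial} with Theorem \ref{thm_Bettisplitting_binomialproduct}. Your added remarks unpacking Proposition \ref{function_monomial} via Lemma \ref{NgV} and the decomposition $I^{(n)} = Q_1^n \cap \cdots \cap Q_s^n$ are accurate but not strictly needed, since the paper simply cites that proposition.
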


\begin{proof}
By Proposition \ref{function_monomial}, both the filtrations $\{I^{(i)}\}_{i \ge 0}$ and $\{J^{(j)}\}_{j \ge 0}$
are Tor-vanishing. Therefore, the conclusion follows from Theorem \ref{binomial} and Theorem \ref{thm_Bettisplitting_binomialproduct}.\end{proof}

Our results on Tor-vanishing have some interesting consequences on the relationship between
the depth and the regularity of $A/I^{(n-1)}, A/I^{(n)}$, and $I^{(n-1)}/I^{(n)}$.

\begin{proposition}
Let $I$ be a nonzero proper homogeneous ideal in $A$.	
Assume that $\chara(k)=0$ or $I$ is a monomial ideal. Then for any $n\ge 1$,
\begin{enumerate}[\quad \rm (i)]
\item $\depth  I^{(n-1)}/I^{(n)} = \min\{\depth A/I^{(n-1)}+1, \depth A/I^{(n)}\},$
\item $\reg  I^{(n-1)}/I^{(n)} = \max\{\reg I^{(n-1)}, \reg I^{(n)}-1\}.$
\end{enumerate}
\end{proposition}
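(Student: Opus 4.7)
The plan is to exploit the Tor-vanishing of the symbolic filtration, which has already been established in the paper under both hypotheses. First, under the assumption that $\chara(k)=0$ or that $I$ is a monomial ideal, Proposition \ref{mapoftors_symbolicpower} (respectively Proposition \ref{function_monomial}) tells us that the inclusion map $I^{(n)} \hookrightarrow I^{(n-1)}$ is Tor-vanishing, i.e.\ the induced map $\Tor_i^A(k, I^{(n)}) \to \Tor_i^A(k, I^{(n-1)})$ is zero for every $i \ge 0$.

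Next, I would apply this vanishing to the long exact sequence of $\Tor^A(k,-)$ coming from the short exact sequence
$$0 \to I^{(n)} \to I^{(n-1)} \to I^{(n-1)}/I^{(n)} \to 0.$$
Because every connecting map $\Tor_i^A(k, I^{(n)}) \to \Tor_i^A(k, I^{(n-1)})$ is zero, the long exact sequence breaks into short exact pieces
$$0 \to \Tor_i^A(k, I^{(n-1)}) \to \Tor_i^A(k, I^{(n-1)}/I^{(n)}) \to \Tor_{i-1}^A(k, I^{(n)}) \to 0$$
for all $i \ge 0$ (with the convention $\Tor_{-1} = 0$). Reading off graded Betti numbers, this gives
$$\beta_{i,j}(I^{(n-1)}/I^{(n)}) = \beta_{i,j}(I^{(n-1)}) + \beta_{i-1,j}(I^{(n)}).$$

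From this decomposition of the Betti table, the projective dimension and regularity of $I^{(n-1)}/I^{(n)}$ are immediate:
$$\pd(I^{(n-1)}/I^{(n)}) = \max\{\pd I^{(n-1)},\, \pd I^{(n)} + 1\}, \quad \reg(I^{(n-1)}/I^{(n)}) = \max\{\reg I^{(n-1)},\, \reg I^{(n)} - 1\}.$$
The regularity formula is exactly \rm(ii). For \rm(i), I would combine the projective-dimension formula with the Auslander--Buchsbaum formula $\depth M + \pd M = \dim A$ and the identity $\depth I^{(k)} = \depth A/I^{(k)} + 1$ (from $0 \to I^{(k)} \to A \to A/I^{(k)} \to 0$ and $\depth A = \dim A$), which converts the maximum of projective dimensions into the minimum of depths and produces
$$\depth(I^{(n-1)}/I^{(n)}) = \min\{\depth A/I^{(n-1)} + 1,\, \depth A/I^{(n)}\}.$$

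There is really no serious obstacle here: the entire content of the proposition is packaged in the Tor-vanishing of the symbolic filtration, and once that is cited the argument is a routine bookkeeping exercise on the long exact Tor sequence. The only subtlety worth flagging is verifying that the splitting of the Tor sequence indeed gives \emph{exact} additivity of Betti numbers (so that both $\pd$ and $\reg$ can be computed as a maximum with no cancellation), which is why the vanishing of the connecting maps in every homological degree is essential.
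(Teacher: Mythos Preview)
Your proof is correct and follows essentially the same approach as the paper: both cite the Tor-vanishing of the inclusion $I^{(n)}\hookrightarrow I^{(n-1)}$ from Propositions \ref{mapoftors_symbolicpower} and \ref{function_monomial}, split the long exact Tor sequence of $0\to I^{(n)}\to I^{(n-1)}\to I^{(n-1)}/I^{(n)}\to 0$ into short exact pieces, and read off depth and regularity. The only differences are cosmetic: you spell out the Betti-number additivity and the Auslander--Buchsbaum conversion, whereas the paper simply invokes ``the characterization of the depth and the regularity by Tor''; also, the map $\Tor_i^A(k,I^{(n)})\to\Tor_i^A(k,I^{(n-1)})$ that you call a ``connecting map'' is in fact the map induced by the inclusion, not the boundary map---but this is a harmless slip of terminology and the argument is unaffected.
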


\begin{proof}
By Proposition \ref{mapoftors_symbolicpower} and Proposition \ref{function_monomial}, the inclusion map $I^{(n)} \to I^{(n-1)}$ is Tor-vanishing, i.e.
$\Tor^A_i(k, I^{(n)}) \to \Tor^A_i(k, I^{(n-1)})$ is the zero map for all $i$. From the exact sequence
$$0\to I^{(n)} \to I^{(n-1)} \to I^{(n-1)}/I^{(n)} \to 0,$$
it follows that the long exact sequence of Tor splits into short exact sequences
$$0\to \Tor^A_{i+1}(k, I^{(n-1)})  \to \Tor^A_{i+1}(k, I^{(n-1)}/I^{(n)}) \to \Tor^A_i(k, I^{(n)}) \to 0.$$
Using the characterization of the depth and the regularity by Tor we can easily deduce the conclusion.
\end{proof}



\end{document}